\documentclass[11pt]{amsart}
\usepackage{amsmath,amssymb, amscd,amsthm, amsfonts}
\usepackage[mathscr]{eucal}
\usepackage[all]{xy}
\usepackage[margin=2.5cm]{geometry}

\newtheorem{theorem}{Theorem}[section]
\newtheorem{lemma}[theorem]{Lemma}
\theoremstyle{definition}
\newtheorem{definition}[theorem]{Definition}

\newtheorem{proposition}[theorem]{Proposition}
\theoremstyle{remark}
\newtheorem{remark}[theorem]{Remark}
\numberwithin{equation}{section} 
\theoremstyle{plain}
\newtheorem*{acknowledgement}{Acknowledgement}

\newtheorem{corollary}[theorem]{Corollary}

\def\r{\mathcal R}
\def\s{\mathbb S}
\def\C{\mathbb C}
\def \nequiv{\not \equiv}
\def\B{\mathbb B}

\def\H{\mathbb H}

\def\V{\mathbb V}
\def\W{\mathbb W}

\def\Z{\mathbb Z}
\def\E{\mathbb E}

\def\R{\mathbb R}

\def\V{\mathbb V}
\def\e{\mathcal E}
\def\h{\mathcal H}
\def\p{\mathcal P}
\def\d{\mathcal D}
\def\g{\mathcal G}
\def \o{\mathcal O}
\def \a{\mathcal A}
\def\t{\mathcal T}
\def\P{\mathbb P}

\newcommand{\secref}[1]{section~\ref{#1}}
\newcommand{\thmref}[1]{Theorem~\ref{#1}}
\newcommand{\lemref}[1]{Lemma~\ref{#1}}
\newcommand{\remref}[1]{Remark~\ref{#1}}
\newcommand{\propref}[1]{Proposition~\ref{#1}}
\newcommand{\corref}[1]{Corollary~\ref{#1}}

\begin{document}
\title[Conjugacy classes in M\"obius groups]
{Conjugacy classes in M\"obius groups  }
\author[Krishnendu Gongopadhyay]{Krishnendu Gongopadhyay}
\address{Indian Institute of Science Education and Research (IISER) Mohali, Transit Campus: MGSIPAP Complex, Sector 26 Chandigarh 160 019, INDIA}
\email{krishnendug@gmail.com}
\subjclass[2000]{Primary 51M10; Secondary 20E45, 58D99}
\keywords{Hyperbolic space; M\"obius groups; conjugacy classes; real elements.}
\begin{abstract}
Let $\H^{n+1}$ denote the $n + 1$-dimensional (real) hyperbolic space. Let $\s^{n}$
denote the conformal boundary of the hyperbolic space. The group of conformal diffeomorphisms of $\s^n$ is denoted by $M (n)$. Let $M_o (n)$ be its identity component which consists of all orientation-preserving elements in $M (n)$. The conjugacy classification of isometries in $M_o (n)$ depends on the conjugacy of $T$ and $T^{ -1}$ in $M_o (n)$. For an element $T$ in $M (n)$, $T$ and $T^{ -1}$ are conjugate in $M (n)$, but they may not be conjugate in $M_o (n)$. In the literature, $T$ is called real if $T$ is conjugate in $M_o (n)$ to $T^{ -1}$ . In this paper we classify real elements in $M_o (n)$. 

Let $T$ be an element in $M_o(n)$. Corresponding to $T$ there is an associated element $T_o$ in $SO(n+1)$. If the complex conjugate eigenvalues of $T_o$ are given by $\{e^{i\theta_j}, e^{-i\theta_j}\}$, $0 < \theta_j \leq \pi$, $j=1,...,k$, then $\{\theta_1,...,\theta_k\}$ are called the \emph{rotation angles} of $T$.  If the rotation angles of $T$ are distinct from each-other, then $T$ is called a \emph{regular} element. After classifying the real elements in $M_o (n)$  we have parametrized the conjugacy classes of regular elements in $M_o (n)$. In the parametrization, when $T$ is not conjugate to $T^{ -1}$ , we have enlarged the group and have considered the conjugacy class of $T$ in $M (n)$. We prove that each such conjugacy class can be induced with a fibration structure.
\end{abstract}
\maketitle

\section{Introduction}
Let $\H^{n+1}$ denote the $n+1$-dimensional (real) hyperbolic space. Let $\s^{n}$ denote the conformal boundary of the hyperbolic space. The group of conformal diffeomorphisms of $\s^{n}$ is denoted by $M(n)$. Let $M_o(n)$ be its identity component which consists of all orientation-preserving elements in $M(n)$.  By Poincar\'e extensions, the group of isometries of $\H^{n+1}$ is identified with the group $M(n)$. Throughout the article, in the ball model and the upper-half space model of the hyperbolic space, we denote the isometry group and its identity component by $M(n)$ and $M_o(n)$ respectively. The underlying model of $\H^{n+1}$ would be clear from the context. In this paper we aim to classify the conjugacy classes in $M_o(n)$ and to parametrize them. 

The conjugacy classes in the group $M(n)$, $n \geq 1$,  have been well understood due to the work of several authors, cf. Chen-Greenberg \cite{cg}, Gongopadhyay-Kulkarni \cite{kg},  Greenberg \cite{greenberg}, Kulkarni \cite{kulkarni}, also see Ahlfors \cite{ahlfors}, Cao-Waterman \cite{cw} and Wada \cite{wada} for a viewpoint using Clifford numbers. Using the linear (hyperboloid) model of the hyperbolic space, Gongopadhyay-Kulkarni \cite[Theorem 1.2]{kg}  proved that the conjugacy classes in $M(n)$ are determined by the minimal and the characteristic polynomial of an isometry. In particular, an element in $M(n)$ is conjugate to its inverse. However, this is not true for elements in $M_o(n)$. For example, an unipotent isometry $T$ in $M_o(1)$ is not conjugate to its inverse. This can be seen easily by identifying $M_o(1)$ with $PSL(2, \R)$. Thus a conjugacy class in $M(n)$ may breaks into conjugacy classes in $M_o(n)$.  This happen especially if an isometry $T$ is not conjugate to its inverse in $M_o(n)$. Thus to determine the conjugacy classes in $M_o(n)$ it is important to determine the elements which are conjugate in $M_o(n)$ to its own inverse. The classification of such elements essentially gives the conjugacy classification in $M_o(n)$. 

\subsubsection*{The Real elements in M\"obius groups}Following Feit-Zuckerman \cite{fz}, an element $g$ in a linear algebraic group $G$ 
is said to be \emph{real} if it is conjugate in $G$ to its own inverse. Thus every element in $M(n)$  is real. Reality properties of elements in linear algebraic groups have been studied by many authors due to their role in representation theory 
cf. Feit-Zuckerman  \cite{fz}, Moeglin et. al. \cite{mvw}, Singh-Thakur \cite{st1, st2}, 
Tiep-Zalesski \cite{tz}. The reality properties of the elements in $SO(n,1)$ follow from recent results of Singh-Thakur \cite[Theorem 3.4.6]{st1} combining it with earlier results of Kn\"uppel-Nielsen \cite{kn} and Wonenburger \cite{w}. However, the results of these authors do not carry over to the identity component $SO_o(n,1)$, and none of these authors have addressed this issue either. In this paper we have given a classification of the real elements in the group $SO_o(n,1)$. As an immediate corollary we obtain the classification of real elements in $M_o(n)$.

We work with the linear model (or the hyperboloid model) of the hyperbolic space to determine the reality properties of the conjugacy classes in $M_o(n)$. Let $\V$ be a real vector space of dimension $n+1$ equipped with a non-degenerate quadratic form $Q$ of signature $(n,1)$, i.e. with respect to a suitable coordinate system $Q$ has the form {$Q(x)=x_0^2 +...+x_{n-1}^2-x_{n}^2$}.  Let $O(n,1)$ denote the full group of isometries of $(\V,Q)$. Let $SO(n,1)$ be the index $2$ subgroup of all isometries with determinant $1$. It has two components. We call  $v \in \V$ {\it time-like} (resp. {\it space-like}, resp. {\it light-like}) if
$Q(v) < 0$, (resp. $Q(v) >  0$, resp. $Q(v) = 0$). A subspace $\W$ is  {\it time-like} (resp. {\it space-like}, resp. {\it light-like}) if $Q|_{\W}$ is non-degenerate and   indefinite (resp. $Q|_{\W} > 0$, resp. $Q|_{\W} = 0$). The hyperboloid
$\{v\in \V\;|\; Q(v) = -1\}$ has two components. The component containing the vector 
$e_n=(0,0,...,0,1)$ is taken to be the \emph{ linear or hyperboloid model} of the hyperbolic space $\H^n$. The isometry group $I(\H^n)$ is the index $2$ subgroup of $O(n,1)$ which preserves $\H^{n}$.   The group  $M(n)$ is identified with $I(\H^{n+1})$, and with this identification $M_o(n)=SO_o(n+1,1)$.   Though the groups $M(n)$ and $SO(n+1,1)$ have the same identity component, they differ from each-other due to their second components. The second component in $M(n)$ consists of the orientation-reversing isometries of $\H^{n+1}$ which are of determinant $-1$.

    \medskip  Our main theorem concerning the reality properties of the conjugacy classes in $SO_o(n,1)$ 
 is the following. 
\begin{theorem}\label{realson1}
1. Every element in $SO_o(n,1)$ is real if and only if $n \equiv 0\;(\hbox{mod }4)$ or $n \equiv 3 \;(\hbox{mod }4)$. 

2. If $n \equiv 1 \;(\hbox{mod }4)$, then an element $T$ in $SO_o(n,1)$ is real if and only if it is either a hyperbolic isometry with at least one eigenvalue $1$ or $-1$,  or,  it is not hyperbolic.

3.  If $n \equiv 2 \;(\hbox{mod }4)$, then an element $T$ in $SO_o(n,1)$ is real if and only if 
one of the following holds.

(i) $T$ is hyperbolic, (ii) $T$ is a non-hyperbolic with at least one eigenvalue $-1$, (iii) $T$ is non-hyperbolic, it has no eigenvalue $-1$, and there is at least one eigenvector to $1$ which is space-like. 
\end{theorem}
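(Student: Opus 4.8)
The plan is to convert reality in $SO_o(n,1)$ into a bookkeeping problem for two sign characters. Let $\det\colon O(n,1)\to\{\pm1\}$ be the determinant and let $\epsilon\colon O(n,1)\to\{\pm1\}$ be the orthochronous character, whose kernel is the subgroup preserving the forward time-cone; then $SO_o(n,1)=\ker\det\cap\ker\epsilon$. Every $T$ is conjugate to $T^{-1}$ in $O(n,1)$ (this is the reversing/involution result of Wonenburger \cite{w} and Kn\"uppel--Nielsen \cite{kn} that underlies the $SO(n,1)$ statement of Singh--Thakur \cite{st1}), so the set $\{g\in O(n,1): gTg^{-1}=T^{-1}\}$ is a nonempty coset $g_0 Z$, where $Z=Z_{O(n,1)}(T)$ is the centralizer. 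The basic principle I would use is that $T$ is real in $SO_o(n,1)$ if and only if this coset meets $SO_o(n,1)$, i.e. if and only if $(\det,\epsilon)(g_0)$ lies in the image of the homomorphism $(\det,\epsilon)\colon Z\to\{\pm1\}^2$. Thus the whole theorem reduces to computing, type by type, the pair of sign-characters on one reversing element and on the centralizer.

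Next I would use the primitive/indecomposable decomposition of an isometry of a Lorentzian form (as in Gongopadhyay--Kulkarni \cite{kg}) to split $\V$ into $Q$-orthogonal, $T$-invariant pieces: an \emph{axis} carrying the hyperbolic or non-semisimple behaviour, and an anisotropic (space-like) complement $\V_1$ on which $T$ restricts to a rotation $R\in SO(\dim\V_1)$. Concretely, for $T$ elliptic the axis is a time-like line $\langle w\rangle$ fixed by $T$ and $\dim\V_1=n$; for $T$ hyperbolic the axis is a time-like plane $\V_0$ of signature $(1,1)$ on which $T$ has eigenvalues $\lambda,\lambda^{-1}$ and $\dim\V_1=n-1$; for $T$ parabolic the axis is a signature $(2,1)$ block carrying a single size-$3$ unipotent Jordan block and $\dim\V_1=n-2$. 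I would then refine $\V_1$ into the $(\pm1)$-eigenspaces of $R$ and its $2$-dimensional rotation planes, since these are exactly the places where $\det$ and $\epsilon$ can be adjusted.

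The heart of the argument is four local computations. On the space-like part, reversing $R$ costs one reflection per rotation plane, so the achievable determinant there is $(-1)^{k}$, where $k$ is the number of rotation angles in $(0,\pi)$, while $\epsilon=+1$ throughout; moreover this determinant can be toggled by $-1$ exactly when $R$ has an eigenvalue $+1$ or $-1$ (reflect inside that eigenspace, which still commutes with $R$). For the axis I would show: (a) on the time-like line the reversing restriction may be $\pm\mathrm{id}$, so an orthochronous reversal is available with determinant $+1$; (b) on the hyperbolic plane every reversal has determinant $-1$ while $\epsilon$ is free, so an orthochronous reversal forces determinant $-1$ on the axis; (c) on the parabolic block every reversal satisfies $\det\cdot\epsilon=-1$, so an orthochronous reversal again forces determinant $-1$. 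Facts (b) and (c) are precisely the linear shadow of the classical statement that a hyperbolic or parabolic element of $SO_o(2,1)\cong PSL(2,\R)$ need not be conjugate to its inverse. Combining the axis and the space-like part, $T$ is real in $SO_o(n,1)$ if and only if the required sign of $\det$ on $\V_1$ is achievable: $+1$ in the elliptic case and $-1$ in the hyperbolic and parabolic cases.

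Finally I would read off the three cases by parity. If $R$ has a real eigenvalue the required sign is always achievable, so $T$ is real; in particular this happens automatically whenever $\dim\V_1$ is odd. If $R$ has no real eigenvalue then $\dim\V_1$ is even and the sign is rigidly $(-1)^{\dim\V_1/2}$, with $\dim\V_1\in\{n,n-1,n-2\}$. Substituting and comparing with the required sign gives: for $n\equiv0,3\,(\mathrm{mod}\,4)$ every type is real (either the relevant parity matches or $\dim\V_1$ is odd); for $n\equiv1\,(\mathrm{mod}\,4)$ the elliptic and parabolic cases have odd $\dim\V_1$ and are always real, whereas a hyperbolic element is real only when $R$ acquires a real eigenvalue, i.e. when $T$ has an eigenvalue $1$ or $-1$; and for $n\equiv2\,(\mathrm{mod}\,4)$ hyperbolic elements have odd $\dim\V_1$ and are always real, while a non-hyperbolic $T$ is real exactly when $R$ has a real eigenvalue, which translates into either an eigenvalue $-1$ or a space-like eigenvector for $1$. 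This yields the three parts of the theorem. I expect the main obstacle to be the local computations (b) and (c) together with the exact identification of ``$R$ has a real eigenvalue'' with the eigenvalue/space-like conditions of part 3: one must check that in the parabolic and elliptic normal forms the only light-like or time-like eigenvectors occur for the eigenvalue $1$ on the axis, so that a $(-1)$-eigenvector or a \emph{space-like} $1$-eigenvector can only come from $R$; the orthogonal-group reduction to normal form is routine by comparison.
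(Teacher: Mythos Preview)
Your proposal is correct and follows essentially the same route as the paper: decompose $\V$ into a time-like ``axis'' (a line, a signature-$(1,1)$ plane, or a signature-$(2,1)$ block according as $T$ is elliptic, hyperbolic, or parabolic) plus its space-like complement, compute the forced determinant of an orthochronous reverser on the axis (your (a),(b),(c) are exactly the paper's ``it is easy to see'' assertions), and then invoke the $SO(m)$ reality result on the complement to decide when the remaining sign can be achieved. Your packaging via the pair $(\det,\epsilon)$ and the coset $g_0Z$ is a little more systematic than the paper's presentation, but the decomposition, the local sign computations, and the final mod-$4$ case analysis are identical.
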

The theorem is proved in \secref{real}. This theorem, along with \cite[Theorem 1.2]{kg},  provide a better understanding of the conjugacy classes in $M_o(n)$. As an immediate corollary we have the following. 

\begin{corollary}\label{mn}
Every element in the group $M_o(n)$ is real if and only if  
$n \equiv 2\;(\hbox{mod }4)$ or $n \equiv 3 \;(\hbox{mod }4)$. 
\end{corollary}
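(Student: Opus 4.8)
The plan is to obtain Corollary~\ref{mn} as a direct consequence of \thmref{realson1}, with no new argument beyond the group identification recorded in the introduction. Recall that under the Poincar\'e extension $M(n)$ is identified with $I(\H^{n+1})$, and that under this identification $M_o(n) = SO_o(n+1,1)$. Consequently, ``$T$ is real in $M_o(n)$'' means exactly ``$T$ is real in $SO_o(n+1,1)$,'' and reality in the latter group is completely governed by \thmref{realson1}.

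First I would invoke part (1) of \thmref{realson1} with the ambient parameter taken to be $n+1$ in place of $n$: every element of $SO_o(m,1)$ is real if and only if $m \equiv 0 \pmod 4$ or $m \equiv 3 \pmod 4$. Substituting $m = n+1$ and reindexing the residue classes, $n+1 \equiv 0 \pmod 4$ is equivalent to $n \equiv 3 \pmod 4$, while $n+1 \equiv 3 \pmod 4$ is equivalent to $n \equiv 2 \pmod 4$. Therefore every element of $M_o(n)$ is real if and only if $n \equiv 2 \pmod 4$ or $n \equiv 3 \pmod 4$, which is precisely the assertion of the corollary.

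Since part (1) of \thmref{realson1} is itself an ``if and only if'' statement, the forward and the reverse implications of the corollary both follow automatically; in particular, when $n \equiv 0$ or $n \equiv 1 \pmod 4$ the theorem already guarantees the existence of non-real elements in $SO_o(n+1,1)$, and parts (2) and (3) identify them concretely (for example, in the case $n \equiv 0 \pmod 4$ a hyperbolic isometry of $SO_o(n+1,1)$ none of whose eigenvalues equals $\pm 1$). The only thing requiring care is the bookkeeping of the shift $n \mapsto n+1$ and the attendant relabeling of the mod-$4$ classes; there is no substantive obstacle, as the entire content of the corollary is carried by \thmref{realson1}.
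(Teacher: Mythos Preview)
Your argument is correct and matches the paper's approach: the corollary is stated there as an immediate consequence of \thmref{realson1} via the identification $M_o(n)=SO_o(n+1,1)$, with the only work being the shift $n\mapsto n+1$ in the mod-$4$ residues, exactly as you do.
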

An element $T$ in $SO_o(n,1)$ is called \emph{strongly real} if it is a product of two involutions in $SO_o(n,1)$.  
\begin{corollary}\label{1}
An element $T$ in $SO_o(n,1)$ is strongly real if and only if it is real. 
\end{corollary}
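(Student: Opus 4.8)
The plan is to prove the two implications separately, with essentially all the content concentrated in deducing strong reality from reality. The reverse implication is formal and costs nothing: if $T=\sigma_1\sigma_2$ with $\sigma_1,\sigma_2$ involutions in $SO_o(n,1)$, then $\sigma_1 T\sigma_1^{-1}=\sigma_1\sigma_1\sigma_2\sigma_1=\sigma_2\sigma_1=(\sigma_1\sigma_2)^{-1}=T^{-1}$, so $\sigma_1$ conjugates $T$ to $T^{-1}$ and $T$ is real. Hence every strongly real element is automatically real.

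For the forward direction I would first record the standard reduction: $T$ is strongly real if and only if the conjugacy $T\mapsto T^{-1}$ can be realized by an involution. Indeed, if $g\in SO_o(n,1)$ satisfies $g^2=\mathrm{id}$ and $gTg^{-1}=T^{-1}$, then setting $\tau:=gT$ one computes $\tau^2=gTgT=(gTg)T=T^{-1}T=\mathrm{id}$, using $g^{-1}=g$; since $g\tau=g^2T=T$, this exhibits $T=g\tau$ as a product of the two involutions $g$ and $\tau$. Thus the corollary reduces to the single assertion that \emph{whenever $T$ is real, the reversing element can be chosen to be an involution.}

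I would establish this by revisiting the proof of \thmref{realson1} case by case. That proof decomposes $\V$ into $Q$-orthogonal $T$-invariant subspaces: a space-like part on which $T$ acts as a rotation through the angles $\theta_j$, the remaining part carrying any hyperbolic translation, and the $\pm 1$-eigenspaces; the reversing element is built blockwise. On each rotation $2$-plane a reflection is an involution of determinant $-1$ sending the rotation $R(\theta_j)$ to $R(-\theta_j)$, and the hyperbolic boost on a time-like $2$-plane is likewise reversed by the space-like reflection on that plane, which is determinant $-1$ and preserves the forward sheet of the hyperboloid. The task is then purely one of bookkeeping: assembling these blockwise reflections into a single involution lying in the identity component $SO_o(n,1)$, i.e. with determinant $+1$ and preserving the time-orientation.

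The main obstacle is exactly this component constraint, and it is precisely what the residue conditions in \thmref{realson1} encode. Every space-like reflection contributes a factor $-1$ to the determinant, so whether the assembled product has determinant $+1$ is a parity question governed by the number of rotation blocks, hence by $n\bmod 4$. The genuinely delicate case is $n\equiv 2\pmod 4$, where \thmref{realson1} tells us that reality forces either a $-1$-eigenvalue or a space-like eigenvector for the eigenvalue $1$: that space-like eigenvector (respectively the action $-1=(-1)^{-1}$ on the $-1$-eigenspace) furnishes exactly the extra space-like degree of freedom needed to insert one more reflection and flip the determinant while keeping the involution time-orientation preserving, and hence inside $SO_o(n,1)$. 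Since in every case of \thmref{realson1} the same data that guarantees reality also supplies a reversing involution, the two notions coincide, which proves the corollary.
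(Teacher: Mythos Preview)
Your proposal is correct and follows essentially the same approach as the paper: reduce strong reality to finding an involutory reversing element, then observe that the blockwise conjugating elements constructed in the proof of \thmref{realson1} (reflections on the rotation planes, the space-like reflection on the boost plane, etc.) are involutions and can be assembled into an involution in $SO_o(n,1)$ precisely under the reality conditions. Your write-up is considerably more explicit than the paper's one-line proof, which simply refers back to the decomposition in the proof of \thmref{realson1}; the only case you leave implicit is the parabolic one, but the same blockwise argument applies there with the $3$-dimensional indecomposable summand.
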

Much after the completion of this work,  I have come to know about the work of Short et. al. \cite{sh1, sh2}. What we have called \emph{real}, resp. \emph{strongly real elements}, these authors have called them \emph{reversible}, resp. \emph{strongly reversible} in their papers. Motivated by the viewpoint of physics and dynamics \cite{bir, d, mw}, these authors are also interested in  studying the reality properties of conjugacy classes in groups. The three body problem in Birkhoff \cite{bir} is reversible in the sense that one can reverse time and the problem stays much the same. Devaney \cite{d} compared reversible and Hamiltonian reversible dynamical systems. The notion of strongly reversible elements was implicit in Moser-Webster \cite{mw}. These background materials motivated  Short et. al. to introduce the terminology `reversible' and  `strongly reversible'.  They have, however,  mostly overlooked the literature on real elements, which has grew out from the algebraic viewpoint.  

Using the ball model, Short \cite{sh1} has  studied the reality properties of conjugacy classes in the M\"obius groups.  The  approach of Short is geometric.  In particular, Short has a geometric proof of \corref{mn}. In the other cases, the classification of Short are different from that of us.

\subsubsection*{The Topology of the Conjugacy classes} After the classification of the conjugacy classes, we aim to parametrize them. For this purpose we appeal to the ball model and the upper-half space model of the hyperbolic space.  We call two isometries are in the same \emph{fixed-point class} if their fixed-point classifications are the same, i.e. both are either elliptic, or  parabolic,  or hyperbolic. 
The \emph{stretch factor} of a hyperbolic isometry corresponds to its real eigenvalues which are different from $1, \  -1$,  in the linear model. The stretch factors of elliptics and parabolics are defined to be one. The stretch factors of two hyperbolic isometries are a \emph{unit} if they are inverse to each-other. 

 If an element $S$ in $SO(n+1)$ has $k$ rotation angles, then it is called a \emph{$k$-rotation}. If the rotation angles of $S$ are distinct from each-other, then $S$ is called a \emph{regular $k$-rotation}. 

Let $T$ be an element in $M_o(n)$. Corresponding to $T$ there is an associated element $T_o$ in $SO(n+1)$. If the complex conjugate eigenvalues of $T_o$ are given by $\{e^{i\theta_j}, e^{-i\theta_j}\}$, $0 < \theta_j \leq \pi$, $j=1,...,k$, then $\{\theta_1,...,\theta_k\}$ are called the \emph{rotation angles} of $T$.  The element $T$ is called a \emph{$k$-rotatory elliptic}, resp. \emph{parabolic}, resp. \emph{ hyperbolic} if $T$ is elliptic, resp. parabolic, resp. hyperbolic, and $T_o$ is a $k$-rotation. If $T_o$ is a regular $k$-rotation, then $T$ is called a \emph{regular} isometry.

 Classically a rotation angle or an \emph{angle} of an isometry was defined to be an element in $[-\pi, \pi]$, cf. Greenberg \cite{greenberg}. We restrict them in the interval $(0, \pi]$ in order to  associate them to the complex conjugate eigenvalues which are conjugacy invariants. Thus according to our definition $T$ and $T^{-1}$ have the same rotation angles. The details of this classification is given in \secref{class}.

Let $\mathfrak S(2;k)$ denote the space of all decompositions of $\E^{2k}$ into orthogonal sum of two dimensional subspaces. Let $\oplus$ denote the orthogonal sum. Then 
$$\mathfrak S(2; \ k)=\{(\V_1,....,\V_k)\;|\;\hbox{for each }i,\hbox{ dim }\V_i=2, \hbox{ and} \;\E^{2k}=\oplus_{i=1}^k\V_i\}.$$
For $k=0$, we define $\mathfrak S(2; \ 0)$ to be a single point. The space $\mathfrak S(2; \ k)$ is homeomorphic to $O(2k)/O(2)^k$. 
Two decompositions $D_1$ and $D_2$ in $\mathfrak S(2; \ k)$ are said to be equivalent if $D_1$ is obtained from $D_2$ by a permutation of the summands. That is, if $D_1: \E^{2k}=\oplus_{i=1}^k \V_i$ is a given decomposition, then an element in its equivalence class is given by $\E^{2k}=\oplus_{j=1}^k \V_{i_j}$, where $(i_1,...,i_k)$ is a permutation of $(1,2,...,k)$. The set of equivalence classes of the decompositions is denoted by $\d(2; \ k)$. Let $S_k$ denote the symmetric group of $k$ symbols. Then $S_k$ acts freely on $\mathfrak S(2; \ k)$. Thus $\d(2; \ k)$ is the quotient space   $\mathfrak S(2; \ k)/S_k$. 

Let $\Delta=\{(x,x)\;|\;x \in \s^n\}$. Then $\Delta$ is a closed subspace of $\s^n \times \s^n$. The space $\tilde \Delta_n$ is the open subspace $\s^n \times \s^n-\Delta$. The space $\B_n$ is the quotient space of $\tilde \Delta_n$ obtained from the equivalence relation $(x,y) \sim (y,x)$. Let $\g_k(n)$ denote the Grassmannian of $k$-dimensional vector subspaces of $\R^n$ and $\mathcal S_k(n)$ denote the Grassmannian of $k$-dimensional spheres in $\s^n$. 

To avoid confusions regarding reality issue, we further adopt the convention of taking
the conjugacy class of $T$ in $M (n)$. With this convention, two elements in $M_o (n)$ are conjugate if and only if they are in the same fixed-point class, have the same set of rotation angles, and their stretch factors are either the same or a unit.  Some parametrizations of the conjugacy classes for $M_o(1)$ is already available in the literature, cf. Falbel-Wentworth \cite[p-6]{fw} and Kulkarni-Raymond 
\cite[ p-242]{kr}, also
see Kulkarni \cite[p-52]{kulkarni}. In higher dimensions, it is not expected to have similar parametrization of the conjugacy classes. However,  the conjugacy classes of regular isometries carry a natural parametrization  even in higher dimensions. 
\begin{theorem}\label{tcco}
 Let $T$ be a regular $k$-rotation of $\E^n$ with rotation angles $\Theta=\{\theta_1,...,\theta_k\}$, $0 < \theta_i \leq \pi$., i.e.  for all $i, \ j$, $\theta_i \neq \theta_j$. Let $\o_{k, \Theta}$ denote the conjugacy class of $T$ in $O(n)$. 

(i) When $n = 2k$, the conjugacy class $\o_{k, \Theta}(2k)$ is a $d_0(k)$ sheeted covering space over $\d(2; \  k)$, where 
$$d_0(k)= \left \{ \begin{array}{ll} 2^k k! \ \hbox{ if $T$ has no rotation angle $\pi$,}\\
                    2^{k-1}k! \ \hbox{ otherwise.}
                   \end{array}  \right.$$ 

(ii) When $n > 2k$, the conjugacy class $\o_{k, \Theta}(n)$ is the total space of a fibration with base $\g_{n-2k} (n)$ and fiber $\o_{k, \Theta} (2k)$. 
\end{theorem}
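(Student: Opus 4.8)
The plan is to realise both the conjugacy class $\o_{k,\Theta}$ and the target spaces $\d(2;k)$ and $\g_{n-2k}(n)$ as homogeneous spaces for orthogonal groups, so that the maps asserted in the statement become canonical coset projections. I will repeatedly use the elementary fact that, for a Lie group $G$ with closed subgroups $H\subseteq K$, the projection $G/H\to G/K$ is a locally trivial fibration with fibre $K/H$; moreover it is a covering of degree $[K:H]$ exactly when $H$ is open in $K$, equivalently when $H$ contains the identity component $K_0$.

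For part (i) I would first record the canonical $T$-invariant splitting $\E^{2k}=\V_1\oplus\cdots\oplus\V_k$ into the two-dimensional eigenplanes, with $T|_{\V_i}$ the rotation through $\theta_i$. Because the $\theta_i$ are pairwise distinct, any element of $O(2k)$ commuting with $T$ must preserve each $\V_i$ and restrict to a commuting map there; the centraliser of a rotation by $\theta_i\neq\pi$ is $SO(2)$, while $T|_{\V_i}=-\mathrm{Id}$ (the case $\theta_i=\pi$) is central with full centraliser $O(2)$. Hence the centraliser is $Z(T)=SO(2)^k$ when no angle equals $\pi$ and $Z(T)=SO(2)^{k-1}\times O(2)$ otherwise, so that $\o_{k,\Theta}(2k)=O(2k)/Z(T)$. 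Next, the stabiliser of an ordered decomposition is $O(2)^k$, giving $\mathfrak S(2;k)=O(2k)/O(2)^k$ as in the excerpt, and the stabiliser of the corresponding unordered decomposition is the wreath product $O(2)^k\rtimes S_k$, so $\d(2;k)=O(2k)/\big(O(2)^k\rtimes S_k\big)$. The map sending $S\in\o_{k,\Theta}(2k)$ to its unordered family of eigenplanes is then precisely the projection $O(2k)/Z(T)\to O(2k)/\big(O(2)^k\rtimes S_k\big)$. Since $Z(T)$ contains $SO(2)^k$, which is the identity component of $O(2)^k\rtimes S_k$, this projection is a covering, and its degree is the index $[O(2)^k\rtimes S_k:Z(T)]$. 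As $SO(2)^k$ is normal with quotient $(\Z/2)^k\rtimes S_k$ of order $2^kk!$, the index is $2^kk!$ in the first case and $2^{k-1}k!$ in the second; these are exactly the two values of $d_0(k)$.

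For part (ii), with $n>2k$, I would take the splitting $\E^n=(\V_1\oplus\cdots\oplus\V_k)\oplus W$ with $W=\ker(T-\mathrm{Id})$ the $(n-2k)$-dimensional fixed space. The same eigenvalue argument yields $Z(T)=Z'\times O(W)$, where $Z'\subseteq O(2k)$ is the centraliser of the rotational part computed above and $O(W)=O(n-2k)$; thus $\o_{k,\Theta}(n)=O(n)/\big(Z'\times O(n-2k)\big)$. Sending $S$ to its fixed space $\ker(S-\mathrm{Id})\in\g_{n-2k}(n)=O(n)/\big(O(2k)\times O(n-2k)\big)$ is the canonical projection attached to the inclusion $Z'\times O(n-2k)\subseteq O(2k)\times O(n-2k)$, hence a locally trivial fibration whose fibre is
\[
\big(O(2k)\times O(n-2k)\big)\big/\big(Z'\times O(n-2k)\big)=O(2k)/Z'=\o_{k,\Theta}(2k),
\]
which is the asserted fibration.

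The one genuinely delicate step is the centraliser computation, and within it the dichotomy at $\theta_i=\pi$: exactly when an angle equals $\pi$ the corresponding block collapses to the central involution $-\mathrm{Id}$, enlarging its local centraliser from $SO(2)$ to $O(2)$ and thereby halving the number of sheets. Everything else is formal bookkeeping with coset spaces together with the standard fibration/covering criterion recalled at the outset; in particular the combinatorial count of a fibre in part (i) — distribute the $k$ angles among the $k$ planes ($k!$ ways) and then, on each plane carrying an angle $\neq\pi$, choose one of the two orthogonal maps realising that unsigned angle — reproduces $d_0(k)$ independently and serves as a useful check.
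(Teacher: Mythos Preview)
Your argument is correct and complete. It follows a genuinely different route from the paper's own proof, though the underlying maps are the same.

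The paper proceeds geometrically and combinatorially: it defines the map $\alpha:\o_{k,\Theta}(2k)\to\d(2;k)$ by sending $T$ to its eigenspace decomposition, and then \emph{counts} the fibre by hand---for a fixed unordered decomposition there are $k!$ ways to distribute the angles and, on each plane carrying an angle $\neq\pi$, two orthogonal maps realising it. The topology on $\o_{k,\Theta}(2k)$ is then \emph{induced} so as to make $\alpha$ a covering. Part (ii) is handled the same way: define $\mu(T)=\Lambda_T$, identify each fibre with $\o_{k,\Theta}(2k)$, and induce the topology accordingly.

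Your approach replaces this by a uniform homogeneous-space description: compute the centraliser $Z(T)$ exactly, realise both source and target as $O(n)$-coset spaces, and invoke the standard fact that $G/H\to G/K$ is a locally trivial fibration with fibre $K/H$, a covering precisely when $H\supseteq K_0$. This has two advantages. First, it is more rigorous: your conjugacy class carries its natural topology as a subset of $O(n)$ from the outset, and you prove that the decomposition map is a covering for that topology, rather than manufacturing a topology to make the statement true. Second, the sheet count drops out as an index $[\,O(2)^k\rtimes S_k : Z(T)\,]$ rather than via ad hoc enumeration. The paper's approach, on the other hand, is more elementary and makes the combinatorics of the fibre completely explicit; your final paragraph recovers exactly that count as a consistency check, so the two arguments agree on the essential content.
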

It follows from the above theorem that  the induced topological structure of the conjugacy class of a regular $k$-rotation depends on $k$ and whether $\pi$ is a rotation angle or not.  It is independent of the actual numerical values of the rotation angles which are different from $\pi$.  We identify the conjugacy classes of the regular $k$-rotations with  no rotation angle $\pi$, and denote it by $\o_k(n)$. The conjugacy classes of the regular $k$-rotations with a rotation angle $\pi$ are also identified, and we denote it by $\o_k^-(n)$. 

\begin{theorem}\label{tcc}
Let $T$ be a regular isometry of $\H^{n+1}$ with rotation angles  $\Theta=\{\theta_1,...,\theta_k\}$. Assume that all the rotation angles of $T$ are different from $\pi$. 

(i) Let $T$ be a  $k$-rotatory elliptic. For $k \leq [ \frac{n}{2}]$, the conjugacy class of $T$  in $M(n)$ is the total space of a fibration with base $\mathcal S_{n-2k} (n)$ and fiber $\o_{k} (2k)$.

   Suppose $n$ is an odd number. Then the conjugacy class of $T$  is the total space of a fibration with base $\H^{n+1}$ and fiber $\o_{\frac{n+1}{2}} (n + 1)$.

(ii) Let $T$ be a $k$-rotatory parabolic. The conjugacy class of  $T$ in $M(n)$ is the total space of a fibration with base $\s^n$ and fiber $\o_{k} (n) \times {\R^n}^{\ast}$, where ${\R^n}^{\ast}$ denote the punctured $n$-dimensional affine space.
  
(iii) Let $T$ be a  $k$-rotatory hyperbolic.   The  conjugacy class of  $T$  in $M(n)$  is the total space of a fibration with base $\B_n$ and fiber $\o_{k}(n) \sqcup \o_k(n)$, where $\sqcup$ denote the disjoint union.
\end{theorem}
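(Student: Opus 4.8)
The plan is to analyze each of the three fixed-point classes (elliptic, parabolic, hyperbolic) separately, in each case identifying the fixed-point data of the isometry on the boundary sphere $\s^n$ and using the parametrization of these geometric objects as the base of the fibration, with the ``rotational part'' supplying the fiber. Throughout I would work in the ball model for the elliptic case and the upper-half space model for the parabolic and hyperbolic cases, since these make the fixed-point structure most transparent. The key unifying idea is that a regular isometry $T$ is determined, up to conjugacy in $M(n)$, by (a) its fixed-point set on $\s^n$, and (b) the conjugacy class of its associated rotational part $T_o$ on the complementary directions; conjugating $T$ therefore amounts to moving the fixed-point configuration around by the $M(n)$-action while simultaneously acting on the rotational data. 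The assumption that no rotation angle equals $\pi$ is what lets us use the space $\o_k(n)$ rather than $\o_k^-(n)$ as the fiber, invoking \thmref{tcco}.

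For part (i), the elliptic case, a $k$-rotatory elliptic fixes a point in $\H^{n+1}$, and in the ball model we may take this fixed point to be the center; the stabilizer of the center is a copy of $O(n+1)$ acting as the isotropy rotation group, and $T$ acts on the boundary $\s^n$ fixing a round $(n-2k)$-sphere $\mathcal S_{n-2k}$ (the boundary of the fixed totally geodesic subspace), rotating in the $2k$ complementary directions. First I would set up the map sending a conjugate of $T$ to its fixed sphere, realizing the conjugacy class as fibered over $\mathcal S_{n-2k}(n)$; the fiber over a fixed sphere is the set of $k$-rotations with the prescribed angles acting transversally, which is exactly $\o_k(2k)$ by \thmref{tcco}(i). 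When $n$ is odd and $k=\frac{n+1}{2}$, the fixed sphere degenerates ($n-2k=-1$), so the invariant datum becomes the fixed point in $\H^{n+1}$ itself, giving the base $\H^{n+1}$ with fiber $\o_{(n+1)/2}(n+1)$.

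For part (ii), the parabolic case, I would place the unique fixed point at $\infty$ in the upper-half space model, so that $T$ is an affine map of $\R^n$ of the form $T(x)=A x + b$ with $A\in O(n)$ the rotational part and $b\neq 0$ a genuine translation vector. The fixed point on $\s^n$ is the conjugacy invariant giving the base $\s^n$; the fiber should record the rotational conjugacy class $\o_k(n)$ together with the translation data, and the punctured affine space ${\R^n}^{\ast}$ accounts for the nonzero translation vector $b$ (nonzero precisely because $T$ is parabolic, not elliptic). The main care here is to check that conjugation within the stabilizer of $\infty$ acts on the pair $(A,b)$ correctly so that the fiber is honestly the product $\o_k(n)\times{\R^n}^{\ast}$ rather than some twisted bundle over $\s^n$.

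For part (iii), the hyperbolic case, $T$ fixes exactly two points on $\s^n$, an attracting and a repelling fixed point, so the invariant is an \emph{ordered} pair of distinct boundary points, i.e. a point of $\tilde\Delta_n=\s^n\times\s^n-\Delta$; but since conjugation in $M(n)$ can swap the two fixed points (via an element reversing the oriented axis), the genuine invariant is an unordered pair, a point of $\B_n$. Over each such axis the fiber records the rotational part around the axis, and the disjoint union $\o_k(n)\sqcup\o_k(n)$ reflects the two ways of matching the rotation to the two orientations of the axis — equivalently, the two-to-one nature of the $\tilde\Delta_n\to\B_n$ covering is absorbed into the fiber. The hard part, and the step I expect to demand the most care across all three cases, is verifying local triviality: one must produce continuous local sections of the ``fixed-point-data'' maps and check that conjugation by an element carrying one fiber to another is a homeomorphism compatible with the $\o_k$-structure, using the transitivity of the $M(n)$-action on the relevant configuration spaces (round spheres, boundary points, axes) to trivialize over each orbit chart. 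Once transitivity and the smooth stabilizer descriptions are in hand, the fibration structure follows from the standard homogeneous-space fibration $G\to G/H$ together with \thmref{tcco} identifying the fibers.
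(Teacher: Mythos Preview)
Your approach is essentially identical to the paper's: in each case you define the map from the conjugacy class to the fixed-point data (fixed sphere, single boundary point, or unordered pair of boundary points) and identify the fiber using the normalized form of $T$ in the upper-half space or ball model, exactly as the paper does in \secref{cc}. One small difference in emphasis: in the hyperbolic case you attribute the two copies in $\o_k(n)\sqcup\o_k(n)$ to the two orientations of the axis (the $\tilde\Delta_n\to\B_n$ covering), whereas the paper attributes them to the stretch factor taking the value $r$ or $r^{-1}$ --- these are of course equivalent, since swapping the fixed points inverts the stretch factor, but the paper's formulation $\o_k(n)\times\{r,r^{-1}\}$ makes the identification of the fiber more explicit. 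Your concern about verifying local triviality is well placed and in fact goes beyond what the paper does, which simply declares that the map ``induces a topology'' making it a fibration.
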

When $T$ is a regular isometry with a rotation angle $\pi$, then the same theorem holds true with the understanding that for each $p, q$, $\o_p(q)$ is replaced by $\o^-_p(q)$ in the statement. 

These theorems are established in \secref{cco} and \secref{cc} respectively.  The theorems are established by identifying the conjugacy classes with some canonically chosen topological fibrations. These fibrations are obtained from the conjugacy invariants, and the eigenspace decomposition of the  corresponding $k$-rotation. To each regular $k$-rotation, we can assign a unique orthogonal decomposition into two dimensional subspaces corresponding to complex conjugate eigenvalues. This is no more true for non-regular $k$-rotations, and roughly, this is the reason that there is no canonical topological structure on the conjugacy classes of non-regular isometries, cf. \remref{2}. 

\section{Preliminaries}\label{prel}
\subsection{The space $\tilde \Delta_n$ and $\B_n$}
\ Let $\Delta=\{(x,x)\;|\;x \in \s^n\}$. Then $\Delta$ is a closed subspace of $\s^n \times \s^n$. The space $\tilde \Delta_n$ is the open subspace $\s^n \times \s^n-\Delta$. The space $\B_n$ is the quotient space of $\tilde \Delta_n$ obtained from the equivalence relation $(x,y) \sim (y,x)$. 

The space $\tilde \Delta_n$ is  path-connected. Note that for $n \geq 2$ on the sphere $\s^n$,  given three points there exists a path joining two points, and avoiding the third point. Let $(x_1, y_1)$ and $(x_2, y_2)$ be two distinct points on $\tilde \Delta_n$. On the copy $\s^n \times \{y_1\}$, there is a path $\gamma_1$ joining $(x_1, y_1)$ and $(x_2, y_1)$, and not passing through $(y_1, y_1)$. Similarly there is a path $\gamma_2$ joining $(x_2, y_1)$ and $(x_2, y_2)$ on $\{x_2\} \times \s^n$ such that $(x_2, x_2)$ does not belong to $\gamma_2$. Then the composite $\gamma_1 \circ \gamma_2$ is a path on $\tilde \Delta_n$ joining $(x_1, y_1)$ and $(x_2, y_2)$. This shows that the space $\tilde \Delta_n$ is path-connected. Hence for $n \geq 2$, $\tilde \Delta_n$ is connected. It is easy to see that $\tilde \Delta_1$ is also connected. The relation $\sim$ defines a free $\Z_2$ action on $\tilde \Delta_n$. Hence the projection map $p: \tilde \Delta_n \to \B_n$ is a covering. 

\subsection{The Grassmannians}\label{gr}
The Grassmannian $\g_k(n)$ is the collection of all $k$-dimensional vector subspaces of $\R^n$, and $\a_k(n)$ is the collection of all $k$-dimensional affine subspaces of $\R^n$. The Grassmannians are well known topological manifolds. The space $\g_k(n)$ has dimension $k(n-k)$, and the space $\a_k(n)$ has dimension $(k+1)(n-k)$. When $k=1$, the space $\g_1(n)$ is the real projective space $\P \R^n$.

Now consider the collection $\mathcal S_k(n)$ of  $k$-dimensional spheres in $\s^n$. 
A $k$-sphere in $\s^n$ is a section of $\s^n$ by an affine $k+1$-dimensional subspace of $\R^{n+1}$. This identifies $\mathcal S_k(n)$  with an open subspace of $\a_{k+1}(n+1)$, and thus has dimension $(k+2)(n-k)$. As a homogeneous space $\mathcal S_k(n)$ is homeomorphic to $M(n)/(M(k) \times O(n-k))$, where $M(n)$ denote the full group of conformal diffeomorphisms of $\s^n$.  

The $0$-dimensional spheres on $\s^n$ are defined to be 
the subsets containing two distinct points. Hence 
$$\mathcal S_o(n)=\{ \{x, y\}\;| \; x, y \in \s^n, \; x \neq y\}.$$ 
We identify $\mathcal S_o(n)$ with the space $\B_n$. 

 Let $\mathfrak S(2;k)$ denote the space of all decompositions of $\E^{2k}$ into orthogonal sum of two dimensional subspaces. Let $\oplus$ denote the orthogonal sum. Then 
$$\mathfrak S(2;k)=\{(\V_1,....,\V_k)\;|\;\hbox{for each }i,\hbox{ dim }\V_i=2, \hbox{ and} \;\E^{2k}=\oplus_{i=1}^k\V_i\}.$$
For $k=0$, we define $\mathfrak S(2; \ 0)$ to be a single point. 

{\lemma The set $\mathfrak S (2; \ k)$ can be given a natural topology such that it has  dimension $2k(k-1)$.}
\begin{proof}
The group $O(2k)$ acts transitively
on $\mathfrak S(2; \ k)$. The stabilizer subgroup at a point is isomorphic
to the cartesian product of $k$ copies of $O(2)$. Thus the set $\mathfrak S(2; \ k)$ has a bijection with the the space $O(2k)/O(2)^k$. Using this bijection we induce the topology on $\mathfrak S(2; \ k)$ such that it is homeomorphic to $O(2k)/O(2)^k$. 
Thus the dimension of $\mathfrak S(2; \ k)$ is 

$\hbox{dimension }O(2k)- \hbox{dimension } 
O(2)^k=k(2k-1)-k=2k(k-1).$
\end{proof}

\section{Reality properties of conjugacy classes in $M_o(n)$: Proof of \thmref{realson1}}\label{real} 
Let $G$ be a group. An element $g$ in $G$ is called \emph{real} if there exists $h$ in $G$ such that $hgh^{-1}=g^{-1}$. An element $g$ in $G$ is an \emph{involution} if $g^2=1$. An element $g$ in $G$ is called \emph{strongly real} if it is a product of two involutions in $G$. Note that a strongly real element is always real in $G$. Conversely, a real element $g$ in $G$ is strongly real if and only if there is a conjugating element in $G$ which is an involution. 

\subsection{Reality in $SO(n)$}
Let $\V$ be an $n$-dimensional vector space over $\R$ equipped with a non-degenerate positive definite quadratic form $q$. Let $O(n)$ denote the isometry group and let $SO(n)$ denote the index two subgroup of $O(n)$ consisting of all isometries with determinant $1$. We identify $\V$ with $\E^n$. 
The following theorem follows from a result of Wonenburger \cite{w}.
\begin{theorem}\label{w}{\bf[W]}
 Every element in the orthogonal group is strongly real. Hence every element in $O(n)$ is real. 
\end{theorem}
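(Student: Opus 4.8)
The plan is to prove the stronger statement --- that every $A \in O(n)$ is strongly real --- and then deduce reality for free. My strategy is to exploit the real canonical form of orthogonal transformations to split $\E^n$ into an orthogonal direct sum of $A$-invariant pieces on which $A$ acts in one of two simple ways, to write the restriction of $A$ to each piece as a product of two involutions, and finally to glue these involutions back together by taking orthogonal direct sums. The gluing step works because both ``being an involution'' and ``the product relation'' are compatible with orthogonal direct sums: if $A = \bigoplus_i A_i$ and on each summand $A_i = \sigma_i \tau_i$ with $\sigma_i^2 = \tau_i^2 = 1$, then $\sigma = \bigoplus_i \sigma_i$ and $\tau = \bigoplus_i \tau_i$ satisfy $\sigma^2 = \tau^2 = 1$ and $\sigma\tau = A$, and both $\sigma, \tau$ lie in $O(n)$.

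First I would invoke the structure theorem for orthogonal maps: there is an orthonormal basis of $\E^n$ with respect to which $A$ is block diagonal, the blocks being $1 \times 1$ blocks equal to $\pm 1$ or $2 \times 2$ planar rotation blocks $R_{\theta_j}$. Equivalently, $\E^n = V_+ \oplus V_- \oplus \left( \bigoplus_j W_j \right)$ is an orthogonal decomposition into $A$-invariant subspaces, where $A = 1$ on $V_+$, $A = -1$ on $V_-$, and $A|_{W_j} = R_{\theta_j}$ is a nontrivial rotation of the plane $W_j$ with $0 < \theta_j < \pi$.

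Next I would treat the two kinds of blocks. The reflection part $P := A|_{V_+ \oplus V_-}$, which is $+1$ on $V_+$ and $-1$ on $V_-$, already satisfies $P^2 = 1$, so it is itself an involution, and we may write it as $P \cdot \mathrm{id}$, a product of the two involutions $P$ and the identity. For each rotation plane $W_j$ the key elementary fact is that a planar rotation is a product of two line reflections: if $r_\alpha$ denotes the reflection of $W_j$ across the line making angle $\alpha$ with a fixed axis, then $r_\alpha r_\beta = R_{2(\alpha - \beta)}$, so choosing $\alpha_j - \beta_j = \theta_j/2$ gives $R_{\theta_j} = r_{\alpha_j} r_{\beta_j}$ with $r_{\alpha_j}^2 = r_{\beta_j}^2 = 1$. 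Assembling, I set $\sigma := P \oplus \bigoplus_j r_{\alpha_j}$ and $\tau := \mathrm{id}_{V_+ \oplus V_-} \oplus \bigoplus_j r_{\beta_j}$; by the gluing remark $\sigma, \tau$ are involutions in $O(n)$ with $\sigma\tau = A$, so $A$ is strongly real.

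Finally, reality is immediate from the observation recorded just before the theorem: a strongly real element is real. Concretely, writing $A = \sigma\tau$ with $\sigma^2 = 1$ gives $\sigma A \sigma^{-1} = \sigma\sigma\tau\sigma = \tau\sigma = (\sigma\tau)^{-1} = A^{-1}$, so conjugation by the involution $\sigma \in O(n)$ inverts $A$. I do not expect a genuine obstacle here: once the canonical form is in hand, the argument is bookkeeping. The one point deserving care --- and the one that foreshadows the real difficulty of \thmref{realson1} --- is that the conjugating involutions produced are line reflections of determinant $-1$, hence lie in $O(n) \setminus SO(n)$; this freedom is exactly what is lost when one passes to $SO_o(n,1)$, which is why the indefinite and orientation-preserving setting is genuinely more delicate.
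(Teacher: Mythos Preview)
Your argument is correct. The paper, however, does not supply its own proof of this statement: it simply records it as a theorem of Wonenburger and cites \cite{w}, so there is no in-paper argument to compare against. What you have written is essentially the classical proof behind Wonenburger's result specialized to the definite case: use the real spectral theorem to split $\E^n$ orthogonally into the $\pm 1$-eigenspaces and two-dimensional rotation planes, factor each planar rotation as a product of two line reflections, and take orthogonal direct sums. This is exactly the mechanism the paper relies on implicitly, and indeed the paper's proof of \thmref{kn} (immediately following) reuses the same eigenspace decomposition and the same ``two-reflections'' factorization on each plane.

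One small imprecision in your closing commentary: the involutions $\sigma,\tau$ you build are not literally line reflections, and their determinants are $(-1)^{\dim V_-}(-1)^k$ and $(-1)^k$ respectively, not necessarily $-1$. Your broader point stands, though: the construction gives no control over $\det\sigma$ or $\det\tau$, and it is exactly this lack of control that makes the passage to $SO(n)$ and then to $SO_o(n,1)$ nontrivial --- as the subsequent \propref{soprop} and \thmref{thmson} make precise.
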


The following theorem by Kn\"uppel and Nielsen  ( \cite{kn} Theorem B)  classifies the strongly real elements in $SO(n)$. 

\begin{theorem}\label{kn} {\bf [KN]}  Let $T$ be an element in $SO(n)$. Then $T$ is strongly real in $SO(n)$ if and only if $n \not \equiv 2\;(\hbox{mod }4)$ or an orthogonal decomposition of $\V$ into orthogonally indecomposable $T$-invariant subspaces contains an odd dimensional summand. 
\end{theorem}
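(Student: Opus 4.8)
The plan is to use the reduction noted above, that $T \in SO(n)$ is strongly real precisely when there is an \emph{involution} $\sigma \in SO(n)$ with $\sigma T \sigma^{-1} = T^{-1}$: indeed, if $T = \tau_1\tau_2$ with $\tau_i^2 = 1$ then $\tau_1 T \tau_1^{-1} = \tau_2\tau_1 = T^{-1}$, while conversely $\sigma$ and $\sigma T$ are two such involutions, both of determinant $+1$. Thus the whole question becomes: \emph{when can a reversing involution be chosen with determinant $+1$?} To analyze this I would first put $T$ in canonical orthogonal form. Since $T$ is a real orthogonal map, $\V = \E^n$ splits $T$-orthogonally into the eigenspaces $E_{+1}$ (dimension $a$) and $E_{-1}$ (dimension $b$) together with the real $2$-dimensional rotation blocks of angles $\theta \in (0,\pi)$; grouping the latter by angle, let $\mu_\theta$ be the number of blocks of angle $\theta$ and $m = \sum_\theta \mu_\theta$ the total number of rotation blocks, so that $n = a + b + 2m$. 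The indecomposable summands of odd dimension are exactly the $1$-dimensional ones, i.e. the eigenvalues $\pm1$, so the hypothesis ``there is an odd-dimensional summand'' is simply ``$a + b \ge 1$''.

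The crucial step is to show that a reversing involution is forced to respect this decomposition and to have a prescribed determinant on the rotation part. If $\sigma T \sigma^{-1} = T^{-1}$ then $\sigma$ carries the $\lambda$-eigenspace of $T$ to its $\lambda^{-1}$-eigenspace; hence $\sigma$ preserves $E_{+1}$, $E_{-1}$, and each real isotypic subspace $W_\theta$ (the realification of $E_{e^{i\theta}} \oplus E_{e^{-i\theta}}$, of real dimension $2\mu_\theta$). On $W_\theta \otimes \C$ the operator $\sigma$ interchanges the two conjugate eigenspaces, so in block form $\sigma = \left(\begin{smallmatrix} 0 & A \\ B & 0\end{smallmatrix}\right)$ the relation $\sigma^2 = 1$ gives $B = A^{-1}$ and therefore characteristic polynomial $(t^2-1)^{\mu_\theta}$; thus $\det(\sigma|_{W_\theta}) = (-1)^{\mu_\theta}$, \emph{independently of any freedom to mix blocks of equal angle}. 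Multiplying over $\theta$ gives $\det(\sigma|_{\text{rotation part}}) = (-1)^m$, while on $E_{\pm1}$, where $T$ acts as $\pm1$ and hence commutes with everything, $\sigma|_{E_{\pm1}}$ may be any involution, so its determinant is free to be $\pm 1$ whenever the corresponding eigenspace is nonzero. Consequently a reversing involution of determinant $+1$ exists if and only if $(-1)^m$ can be corrected by the $E_{\pm1}$ factors, i.e. if and only if $a + b \ge 1$ or $m$ is even.

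To finish I would translate this parity condition into the stated congruences and check both implications. If $a + b \ge 1$ (an odd-dimensional summand is present) one builds $\sigma$ block-diagonally, using reflections to reverse each rotation block and, if needed, a reflection on $E_{+1}$ or $E_{-1}$ to correct the sign; hence $T$ is strongly real for every $n$. If instead $a = b = 0$ then $n = 2m$ and the determinant is forced to $(-1)^m$, so $T$ is strongly real iff $m$ is even iff $n \equiv 0 \pmod 4$ iff $n \not\equiv 2 \pmod 4$; this simultaneously yields the ``only if'' direction when no odd summand is present. Combining the two cases gives exactly ``$n \not\equiv 2 \pmod 4$ or there is an odd-dimensional summand,'' as claimed. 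I expect the only genuine obstacle to be the determinant computation on $W_\theta$: the subtle point is that repeated rotation angles allow $\sigma$ to permute and mix equal-angle blocks, yet the determinant on each isotypic piece is nonetheless rigidly equal to $(-1)^{\mu_\theta}$. Once this rigidity is established the remaining parity bookkeeping is routine.
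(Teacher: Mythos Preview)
Your proof is correct and follows essentially the same strategy as the paper: decompose $\V$ into rotation planes and $\pm 1$-eigenspaces, build a block-diagonal reversing involution, and track its determinant. The paper's version is terser---it only constructs one reversing involution $f$ with $\det f=(-1)^{n/2}$ and asserts this determinant can be adjusted iff the stated conditions hold, deferring the rigidity (that \emph{every} conjugator has this determinant on the rotation part) to the subsequent lemma and proposition---whereas you establish $\det(\sigma|_{W_\theta})=(-1)^{\mu_\theta}$ for any reversing involution within the proof itself, which makes both directions self-contained.
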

\begin{proof}
Let $T$ be in $SO(n)$ and has no eigenvalue $1$, $-1$. Then $\V$ has an orthogonal decomposition 
$$\V=\V_1 \oplus .... \oplus \V_k,$$
into two dimensional invariant subspaces.  By the two dimensional case, for each $i=1,...,k$, there is an involution $f_i$ such that $f_i T|_{\V_i} f_i^{-1}=T|_{\V_i}^{-1}$. Let $f=f_1 \oplus f_2 \oplus ... \oplus f_k$. Then $f$ is an involution, and $fTf^{-1}=T^{-1}$, and $\det f=(-1)^{\frac{n}{2}}$. Thus $\det f$ can be made $1$ if and only if either of the conditions stated in the theorem is satisfied.  If $\det f=1$, then $T=f.fT$, a product of two involutions. 

This completes the proof.  
\end{proof}
\begin{lemma}\label{lem1}
Suppose $n$ is even and $T$ be an element in $SO(n)$. Suppose the minimal polynomial of $T$ is a power of an irreducible quadratic polynomial over $\R$. Then $T$ is real if and only if $n \not \equiv 2 \  (\hbox{mod }4)$.  
\end{lemma}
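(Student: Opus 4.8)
```latex
The plan is to reduce the reality question for $T$ to the computation of the determinant of a conjugating involution, in the same spirit as the proof of \thmref{kn}. Since the minimal polynomial of $T$ is a power of an irreducible quadratic $p(x)=x^2-2\cos\theta\, x+1$ over $\R$ (the only irreducible quadratics arising for orthogonal maps have constant term $1$, corresponding to a complex conjugate pair $e^{\pm i\theta}$ with $0<\theta<\pi$), the space $\V$ decomposes as a direct sum of cyclic $T$-invariant subspaces on which $T$ acts through a single Jordan-type block for this quadratic. First I would recall that $T$ and $T^{-1}$ are always conjugate in $O(n)$ by \thmref{w}, so reality in $SO(n)$ is purely a question of whether the conjugation can be realized inside the determinant-one subgroup.

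Next I would set up the decomposition $\V=\W_1\oplus\cdots\oplus\W_m$ into orthogonally indecomposable $T$-invariant subspaces. Because the minimal polynomial is a pure power of an irreducible quadratic and has no real roots, every $\W_i$ is even-dimensional: there are no odd-dimensional summands. This is the crucial structural observation, because it forces the hypothesis of \thmref{kn} to fail on the ``odd summand'' side, leaving only the congruence condition $n\not\equiv 2\pmod 4$ as a possible route to strong reality. Concretely, on each $\W_i$ one builds an involution $f_i$ with $f_i\,(T|_{\W_i})\,f_i^{-1}=(T|_{\W_i})^{-1}$, and a determinant bookkeeping shows $\det f_i=-1$ on each two-dimensional factor, so that the natural conjugating involution $f=f_1\oplus\cdots$ has $\det f=(-1)^{n/2}$.

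The final step is to assemble these facts. By \thmref{kn}, since no orthogonal decomposition of $\V$ into indecomposables contains an odd-dimensional summand, $T$ is strongly real in $SO(n)$ if and only if $n\not\equiv 2\pmod 4$. It remains to upgrade ``strongly real'' to ``real,'' but here the reverse is automatic from the preliminary remark that a strongly real element is always real, so strong reality gives reality when $n\not\equiv 2\pmod 4$. For the converse, when $n\equiv 2\pmod 4$ I would argue that the conjugating element is forced to have determinant $-1$: any $g$ with $gTg^{-1}=T^{-1}$ must permute the isotypic structure in a way that fixes the sign of $\det g$ as $(-1)^{n/2}=-1$, so no determinant-one conjugator exists and $T$ is not real in $SO(n)$. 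This sign rigidity is where I expect the main obstacle to lie, since one must verify that no choice of $g$ (not merely the block-diagonal $f$ above) can evade the parity constraint; the argument hinges on showing that the $(-1)$-eigenspace contribution to $\det g$ is an invariant of the reality relation, independent of how $g$ mixes the cyclic summands.
```
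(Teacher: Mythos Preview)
Your forward direction (when $n\not\equiv 2\pmod 4$) is fine and indeed follows from \thmref{kn}, but the converse is where your argument is incomplete, and you correctly flag this yourself. Saying that ``any $g$ with $gTg^{-1}=T^{-1}$ must permute the isotypic structure in a way that fixes the sign of $\det g$'' is the right claim, but you have not supplied a mechanism that pins down $\det g$ for an \emph{arbitrary} conjugator rather than just the block-diagonal $f$ you construct. The worry you raise---that $g$ might mix the cyclic summands---is exactly the point that needs an argument, and nothing in your outline addresses it.

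The paper closes this gap with a one-line complexification trick that bypasses the block decomposition entirely. Since $T$ is orthogonal it is semisimple, so the minimal polynomial is actually the irreducible quadratic itself (your ``Jordan-type block'' language is a slight misreading; there are no nontrivial nilpotent parts). Over $\V_c=\V\otimes_{\R}\C$ one has the eigenspace splitting $\V_c=\V_\theta\oplus\V_{-\theta}$ with $\dim\V_\theta=\dim\V_{-\theta}=m=n/2$. Any $S\in O(n)$ satisfying $STS^{-1}=T^{-1}$ must carry the $e^{i\theta}$-eigenspace of $T$ to the $e^{i\theta}$-eigenspace of $T^{-1}$, i.e.\ it interchanges $\V_\theta$ and $\V_{-\theta}$. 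A linear map that swaps two complementary $m$-dimensional subspaces has determinant $(-1)^m$, so $\det S=(-1)^{n/2}$ for \emph{every} conjugator, not just the handcrafted one. This single computation gives both directions at once and is the missing idea in your proposal.
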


\begin{proof}
Since the minimal polynomial of $T$ is a irreducible power polynomial over $\R$, let its only eigenvalues over $\C$ are $\{e^{i \theta}, e^{-i \theta}\}$. Let the characteristic polynomial of $T$ be 
\\ $\chi_T(x)=(x^2 -2 \cos \theta \ x +1)^m$. Then $n=2m$. Let $S$ be an element in $O(n)$ such that $STS^{-1}=T^{-1}$.

Let $\V_c = \V \otimes_\R \C$  be its complexification. We identify $T$ with $T\otimes_{\R} id$ and also consider it as an operator on $\V_c$. Let $\V_c =\V_{\theta} + \V_{-\theta}$ be the decomposition into its eigenspaces. Then $S$ interchanges $\V_{\theta}$ and $\V_{-\theta}$. It follows that $\det S=(-1)^m$. Hence $S$ is an element of $SO(n)$ if and only if $m$ is even.  
\end{proof}

\begin{proposition}\label{soprop}
Let $T$ be an element in $SO(n)$. Suppose $1$ and $-1$ are not eigenvalues of $T$.  Then $T$ is real if and only if $n \not \equiv 2\; (\hbox{mod }4)$.  
\end{proposition}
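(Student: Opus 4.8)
�The plan is to reduce \propref{soprop} to the two results already established for $SO(n)$, namely \thmref{kn} and \lemref{lem1}, by using the orthogonal eigenspace decomposition of $T$ corresponding to its complex conjugate eigenvalue pairs. Since $1$ and $-1$ are not eigenvalues of $T$, the full space $\V$ decomposes orthogonally as
\begin{equation*}
\V = \W_1 \oplus \W_2 \oplus \cdots \oplus \W_r,
\end{equation*}
where each $\W_s$ is the (real) $T$-invariant subspace on which the minimal polynomial of $T|_{\W_s}$ is a power of a single irreducible quadratic $x^2 - 2\cos\theta_s\, x + 1$, with the $\theta_s$ distinct. Writing $\dim \W_s = 2 m_s$, we have $n = 2\sum_s m_s$, and each $\W_s$ is even-dimensional with $\det(T|_{\W_s}) = 1$, so each restriction lies in $SO(2 m_s)$.

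First I would establish the block structure of any conjugating element. Suppose $S \in SO(n)$ satisfies $STS^{-1} = T^{-1}$. Because the $\W_s$ are the isotypic components for distinct eigenvalue pairs and $S$ conjugates $T$ to $T^{-1}$ (which has the same eigenvalue pairs, since each $\{e^{i\theta_s}, e^{-i\theta_s}\}$ is stable under inversion), $S$ must preserve this decomposition and hence act as $S = S_1 \oplus \cdots \oplus S_r$ with each $S_s \in O(2 m_s)$ conjugating $T|_{\W_s}$ to its inverse. This reduces the reality of $T$ in $SO(n)$ to a sign-of-determinant computation on each block. By \lemref{lem1}, on the block $\W_s$ any such conjugator $S_s$ has $\det S_s = (-1)^{m_s}$, and this sign is forced: no element of $O(2 m_s)$ conjugating $T|_{\W_s}$ to its inverse can have a different determinant, precisely because $S_s$ must interchange the two complex eigenspaces $\V_{\theta_s}$ and $\V_{-\theta_s}$.

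Next I would assemble the global determinant condition. We have
\begin{equation*}
\det S = \prod_{s=1}^r \det S_s = (-1)^{\sum_s m_s} = (-1)^{n/2}.
\end{equation*}
Thus $T$ is real in $SO(n)$ if and only if there exists a conjugator of determinant $1$, which happens if and only if $(-1)^{n/2} = 1$, i.e. $n/2$ is even, i.e. $n \equiv 0 \ (\hbox{mod }4)$. Equivalently, $T$ is real exactly when $n \nequiv 2 \ (\hbox{mod }4)$, as claimed. I would also note that \thmref{kn} is consistent with and indeed directly yields the same conclusion: under the hypothesis that $T$ has no eigenvalue $1$ or $-1$, every orthogonally indecomposable $T$-invariant summand is two-dimensional (hence even), so the criterion in \thmref{kn} reduces to the single condition $n \nequiv 2 \ (\hbox{mod }4)$, and strong reality coincides with reality in this case.

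The main obstacle, and the place requiring genuine care, is the claim that the conjugating element $S$ is forced to respect the eigenspace decomposition and that the determinant sign $(-1)^{m_s}$ on each block is rigid rather than merely achievable. The achievability direction is easy from \lemref{lem1}; the necessity — that no conjugator can evade the sign constraint — rests on showing that any $S$ with $STS^{-1} = T^{-1}$ permutes the isotypic components, and since the eigenvalue pairs are distinct and each is self-inverse, it must fix each $\W_s$ setwise. Making this permutation argument precise over $\R$ (by passing to the complexification $\V_c$ and tracking how $S$ acts on the $\C$-eigenspaces, exactly as in the proof of \lemref{lem1}) is the crux; once the block-diagonal form is secured, the determinant count is routine.
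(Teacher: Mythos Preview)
Your proposal is correct and follows essentially the same route as the paper: decompose $\V$ into isotypic components for the distinct eigenvalue pairs, observe that any conjugator $S$ with $STS^{-1}=T^{-1}$ must preserve each block, apply \lemref{lem1} blockwise to force $\det S_s=(-1)^{m_s}$, and multiply to get $\det S=(-1)^{n/2}$. The only cosmetic difference is that the paper begins with an arbitrary $S\in O(n)$ and reads off the forced determinant, whereas you begin with $S\in SO(n)$ and obtain a contradiction when $n\equiv 2\;(\hbox{mod }4)$; you are also more explicit than the paper about \emph{why} $S$ preserves the isotypic decomposition, which is a point the paper simply asserts.
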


\begin{proof}
If $1$ or $-1$ is not an eigenvalue of $T$, then $n$ must be even. For $T$ in $SO(n)$ there exists a decomposition of $\V$ into $T$-invariant  subspaces 
$$\V =\V_1 \oplus \V_2 \oplus ... \oplus \V_k,$$
where  for each $i=1,2,...,k$, $\V_i\simeq \R[x]/(x^2-2 \cos \theta_i \ x +1)^{m_i}$ for $m_i \geq 1$. Let $S$ be an element in $O(n)$ such that $STS^{-1}=T^{-1}$. Then $S$ keeps each $\V_i$ invariant. Let $S_i=S|_{\V_i}$, $T_i=T|_{\V_i}$. Then $S_i T_i S_i^{-1}=T_i^{-1}$. It follows from the proof of \lemref{lem1} that $\det S_i=(-1)^{m_i}$. 
Thus $\det S=\Pi_{i=1}^k \det S_i=(-1)^{\frac{n}{2}}$. Hence $\det S=1$ if and only if $\frac{n}{2}=2m$. This proves the theorem.  
\end{proof}

The following theorem characterizes reality in $SO(n)$.  It may be considered as a special case of the results of Kn\"uppel-Nielsen \cite{kn} and Singh-Thakur \cite{st1}. The arguments we have used in the proof is motivated by Singh-Thakur \cite[section 3.4]{st1}.

\begin{theorem}\label{thmson}
Let $T$ be an element in $SO(n)$. Then $T$ is real in $SO(n)$ if and only if either $n \not \equiv 2 \;(\hbox{mod }4)$ or $T$ has an eigenvalue $1$ or $-1$. 
\end{theorem}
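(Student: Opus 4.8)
The plan is to reduce the general case to the already-established special cases by splitting off the eigenspaces corresponding to the eigenvalues $1$ and $-1$. Given $T$ in $SO(n)$, I would first decompose $\V$ orthogonally into $T$-invariant subspaces
\[
\V = \V_1 \oplus \V_{-1} \oplus \W,
\]
where $\V_1$ is the generalized eigenspace for the eigenvalue $1$, $\V_{-1}$ is the generalized eigenspace for $-1$, and $\W$ is the orthogonal complement on which $T$ has no eigenvalue $\pm 1$. This decomposition is orthogonal because eigenspaces for distinct eigenvalues of an orthogonal transformation are $q$-orthogonal, and each summand is $T$-invariant. Writing $T = T_1 \oplus T_{-1} \oplus T_{\W}$, reality of $T$ will be controlled summand by summand, with the crucial bookkeeping being the determinant of a conjugating element.

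The key steps, in order, are as follows. First, handle the \emph{if} direction. If $n \not\equiv 2 \pmod 4$, I would produce a conjugating involution with determinant $1$ by combining involutions on each summand: on $\W$ the restriction $T_{\W}$ is real in $O(\dim \W)$ by \thmref{soprop} (or directly by the two-dimensional decomposition), giving a conjugating element of determinant $(-1)^{\dim\W/2}$; on $\V_1$ and $\V_{-1}$, where $T$ acts unipotently up to sign, \thmref{kn} supplies strongly real structure whenever an odd-dimensional indecomposable summand is present, and in any case $T_{\pm 1}$ is conjugate to its inverse in the full orthogonal group by \thmref{w}. The determinant of the assembled conjugator is $(-1)^{n/2}$ in the worst case, which equals $1$ precisely when $n \equiv 0 \pmod 4$; the remaining parities $n$ odd are automatic since then $T$ must have a real eigenvalue $\pm 1$. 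Second, suppose instead that $T$ has an eigenvalue $1$ or $-1$, so that $\V_1$ or $\V_{-1}$ is nonzero; here I would exploit the freedom to flip the sign of a conjugating involution on a one-dimensional $(\pm 1)$-eigenline, changing its determinant by $-1$ and thereby correcting the overall determinant to $1$ regardless of the parity of $n$.

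For the \emph{only if} direction, I would argue contrapositively: assume $n \equiv 2 \pmod 4$ and $T$ has neither $1$ nor $-1$ as an eigenvalue, and show $T$ is not real. In this situation $\V = \W$ and \propref{soprop} applies directly, giving that $T$ is real if and only if $n \not\equiv 2 \pmod 4$ — a contradiction. The content is that \emph{any} conjugating element $S \in O(n)$ with $STS^{-1} = T^{-1}$ must preserve each primary component $\V_i$ and restrict there to an element of determinant $(-1)^{m_i}$ by \lemref{lem1}, forcing $\det S = (-1)^{n/2} = -1$, so no such $S$ lies in $SO(n)$.

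The main obstacle I anticipate is not any single computation but the careful determinant accounting when both $\V_1$ and $\V_{-1}$ may be present together with the $\W$-part: one must verify that the parity constraints coming from the three blocks are consistent and that the sign-flip trick on a $(\pm 1)$-eigenline genuinely lands the conjugator in $SO(n)$ without disturbing the conjugation relation on the other blocks. A subtler point is ensuring that the conjugating element on $\W$ can always be chosen as claimed even when $T_{\W}$ has repeated rotation angles, i.e. that \propref{soprop} is available in full generality and not merely in the regular case; since that proposition is already proved above without a regularity hypothesis, this should go through, but it is the step where I would be most careful to confirm that the block-diagonal assembly respects orthogonality and the determinant formula $\det S = \prod_i \det S_i$.
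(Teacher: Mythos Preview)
Your proposal is correct and follows essentially the same route as the paper: decompose $\V$ into the $\pm 1$-eigenspaces and their orthogonal complement $\W$, invoke \propref{soprop} on $\W$, and exploit the freedom to choose the conjugator arbitrarily on the $\pm 1$-eigenspaces (where $T$ acts as $\pm I$) to correct the determinant. The paper's write-up is slightly leaner---since $T$ is orthogonal and hence semisimple, $T|_{\V_{\pm 1}} = \pm I$ exactly, so there is no need to speak of generalized eigenspaces or to invoke \thmref{kn} on those blocks; any orthogonal map on $\V_{\pm 1}$ already commutes with $\pm I$, and you simply pick one of the desired determinant.
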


\begin{proof}
Suppose $S$ in $O(n)$ be such that $STS^{-1}=T^{-1}$. If $T$ has no eigenvalue $\pm 1$ and $n \equiv 2\;(\hbox{mod }4)$, then it follows from the above proposition that $S$ can not have determinant $1$. Hence $T$ is not real. 

Suppose $T$ has an eigenvalue $1$ or $-1$.  If $-1$ is an eigenvalue of $T$,  it must have an even multiplicity.  Thus we have an orthogonal decomposition of $\V$ into $T$-invariant subspaces
$$\V=\V_1 \oplus .... \oplus \V_k \oplus \Lambda_1 \oplus \Lambda_{-1},$$
where each $\V_i$ is $T$-invariant and even dimensional, $T|_{\Lambda_1}=I$,  $T|_{\Lambda_{-1}}=-I$. Suppose $\W=\V_1 \oplus .... \oplus \V_k$. Then dimension of $\W$ is even.  By the reality of the orthogonal group, there exists an orthogonal map  $S_w: \W \to \W$ such that $S_wT_wS_w^{-1}=T_w^{-1}$ and $\det S_w = I$ or $-1$. 
Since the maps $I$ and $-I$ commutes with any element in the orthogonal group, after choosing such $S_w$, the maps $S_1:{\Lambda} \to \Lambda$ or $S_{-1}:\Lambda^{-1} \to \Lambda^{-1}$ can be chosen 
accordingly such that $STS^{-1}=T^{-1}$ and $\det S=1$, where $S=S_w \oplus S_1 \oplus S_{-1}$.  Hence $T$ is real in $SO(n)$. 

This completes the proof. 
\end{proof}
\begin{corollary}
An element $T$ in $SO(n)$ is real if and only if it is strongly real. 
\end{corollary}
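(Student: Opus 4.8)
The plan is to show both directions of the equivalence, leveraging the structural results already established. The statement to prove is \corref{thmson}'s corollary: an element $T$ in $SO(n)$ is real if and only if it is strongly real. Since a strongly real element is always real (being a product of two involutions, it is conjugate to its inverse by one of them), only the forward direction requires work: I would show that every real $T$ in $SO(n)$ is strongly real. By \thmref{thmson}, $T$ is real precisely when $n \not\equiv 2 \pmod 4$ or $T$ has an eigenvalue $\pm 1$, so it suffices to exhibit an involution $f$ in $SO(n)$ with $fTf^{-1} = T^{-1}$ in each of these cases.

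First I would recall the general principle stated in \secref{real}: a real element is strongly real exactly when the conjugating element can be chosen to be an involution. So the entire task reduces to upgrading the conjugating maps constructed in the earlier proofs to involutions lying in $SO(n)$. The key observation is that the conjugators built in the proofs of \thmref{kn}, \lemref{lem1}, and \propref{soprop} are already assembled from involutions on the invariant summands. On each two-dimensional summand $\V_i$ carrying complex eigenvalues $\{e^{i\theta_i}, e^{-i\theta_i}\}$, the reflection $f_i$ satisfying $f_i T|_{\V_i} f_i^{-1} = T|_{\V_i}^{-1}$ is an involution with $\det f_i = -1$; on the eigenspaces $\Lambda_1$ and $\Lambda_{-1}$ where $T$ acts as $\pm I$, any involution commutes with $T$ and hence conjugates $T$ to $T^{-1}$ trivially. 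Thus I would set $f = \bigoplus_i f_i \oplus f_{\Lambda}$ and verify directly that $f^2 = 1$.

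The remaining point is the determinant bookkeeping, which is exactly the content already worked out. When $n \not\equiv 2 \pmod 4$ and $T$ has no eigenvalue $\pm 1$, the product of the $k = n/2$ reflections gives $\det f = (-1)^{n/2} = 1$, so $f \in SO(n)$ and $T = f \cdot (fT)$ is a product of two involutions. When $T$ has an eigenvalue $\pm 1$, the summands $\Lambda_1$, $\Lambda_{-1}$ provide free determinant, and I can adjust the involution restricted to these eigenspaces to flip the sign of $\det f$ as needed, thereby forcing $f \in SO(n)$ while keeping $f$ an involution; this is the same adjustment used in the proof of \thmref{thmson}. Once $f$ is an involution in $SO(n)$ with $fTf^{-1} = T^{-1}$, we have $T = f \cdot (fT)$ where both $f$ and $fT$ square to the identity, establishing strong reality.

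The main obstacle I anticipate is subtle rather than computational: ensuring that when one adjusts the involution on the $\pm I$-eigenspaces to correct the determinant, one can still produce a genuine involution of the correct determinant. On $\Lambda_{-1}$ (even-dimensional) and $\Lambda_1$, one has involutions of both determinants available provided the relevant eigenspace is nonzero, so the sign correction is always achievable in precisely the cases where \thmref{thmson} guarantees reality. I would make this explicit by noting that an involution of determinant $-1$ exists on any nonzero eigenspace (a single reflection), which lets me toggle $\det f$ between $+1$ and $-1$ exactly when an eigenvalue $\pm 1$ is present. This matches the reality criterion, so the two conditions coincide and no case is lost.
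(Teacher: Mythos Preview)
Your argument is correct. The forward direction is handled exactly as you describe: on each two-dimensional rotation block a reflection serves as an involutive conjugator with determinant $-1$, and on the $\pm 1$--eigenspaces you can choose an involution of either determinant to fix the overall sign. The case split you give covers precisely the reality criterion of \thmref{thmson}, so nothing is lost.

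The paper's own proof, however, is the one-line observation that the condition characterizing reality in \thmref{thmson} (namely $n\not\equiv 2\pmod 4$ or $T$ has eigenvalue $\pm 1$) is the same as the condition characterizing strong reality in \thmref{kn} (namely $n\not\equiv 2\pmod 4$ or the orthogonal decomposition has an odd-dimensional summand, which for a semisimple $T$ in $SO(n)$ amounts to an eigenvalue $\pm 1$). Your approach instead inlines the constructive content of the proof of \thmref{kn} together with the determinant-correction trick from \thmref{thmson}, building the conjugating involution directly rather than matching two existing characterizations. The paper's route is shorter since both theorems are already on the table; your route is more self-contained and makes the involution explicit, which is what one would want if \thmref{kn} had not been stated separately.
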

\begin{proof}
The corollary follows from the above theorem, combining it with \thmref{kn}.[KN]. 
\end{proof}

\subsection{Proof of \thmref{realson1}} 

Suppose $T$ is an element in $SO_o(n,1)$. Let $T$ be elliptic. Then $T$ fixes a time-like eigenvector $v$. Let $\W$ be the space-like orthogonal complement to the one-dimensional subspace spanned by $v$. Then $T_o=T|_{\W}$ is an element in $SO(n)$. If $n \nequiv 2\;(\hbox{mod }4)$, then there exists an orthogonal map $S_o: \W \to \W$ such that $\det S_o=1$ and $S_oT_oS_o^{-1}=T_o^{-1}$. Hence there exists $S=\begin{pmatrix}S_o & 0 \\0 & 1\end{pmatrix}$ in $SO_o(n,1)$ such that $STS^{-1}=T^{-1}$. Thus $T$ is real in $SO_o(n,1)$. 

Suppose $n \equiv 2\;(\hbox{mod }4)$, and suppose $T$ has no space-like eigenvalue $1$, $-1$. In this case, as in the proof of \propref{soprop},  any choice of $S_o$ has determinant necessarily $-1$. Thus it is not possible to choose any $S$ as above. Hence $T$ can not be real. 

Suppose $T$ is hyperbolic. Then $T$ has a real eigenvalue $r>0$. Consequently, $\V$ has an orthogonal decomposition $\V=\V_r \oplus \W$, where $\V_r$ is a $2$-dimensional orthogonally indecomposable time-like subspace and $\W$  is its space-like orthogonal complement of dimension $(n-1)$. Denote $T_r=T|_{\V_r}$, $T_o=T|_{\W}$. Since $T$ is semisimple, and $\V_r$ is an eigenspace of $T$, it follows that any element $S$ which conjugates $T$ to $T^{-1}$ must preserve $\V_r$. It is easy to see that any $f$ in $I(\H^1)$ such that $fT_rf^{-1}=T_r^{-1}$ must have determinant $-1$. Thus $T$ is real in  $SO_o(n,1)$  if and only if we can choose an $S_o$ in $O(n-1)$ such that $S_oT_oS_o^{-1}=T_o^{-1}$ and $\det S_o=-1$. This is the case precisely when $n-1 \nequiv 0\;(\hbox{mod }4)$, i.e. 
$n \nequiv 1\;(\hbox{mod }4)$, or $T_o$ has an eigenvalue $\pm 1$.

Suppose $n \equiv 1\;(\hbox{mod }4)$ and $T$ has no space-like eigenvalue $1$ or $-1$.  
Then similarly as above, any choice of $S_o$ would have determinant necessarily $1$, and hence $T$ can not be real in $SO_o(n,1)$.

Suppose $T$ is parabolic. Then $T$ has a time-like non-degenerate indecomposable subspace 
$\V_1$ of dimension $3$, and $\V=\V_1 \oplus \W$, where $\W$ is the space-like $(n-2)$-dimensional orthogonal complement of $\V_1$. Note that $T|_{\V_1}$ has minimal polynomial $(x-1)^3$. Let $T_1=T|_{\V_1}$, $T_o=T|_{\W}$. Then $\V_1$ must be invariant under an isometry $S$ which conjugates $T$ to $T^{-1}$. Further $Q|_{\V_1}$ has signature $(2,1)$, hence we may consider $T_1$ as an unipotent isometry in $I(\H^2)$. 
It is easy to see that any isometry $S_1$ in $I(\H^2)$ which conjugates $T_1$ to  $T_1^{-1}$ must have determinant $-1$. Hence $T$ is real when there is an element $S_o$ in $O(n-2)$ such that $S_oT_oS_o^{-1}=T_o^{-1}$ and $\det S_o=-1$. This is the case precisely when $n-2 \nequiv 0\;(\hbox{mod }4)$ i.e. $n \nequiv 2\;(\hbox{mod }4)$, or $S_o$ has an eigenvalue $1$ or $-1$. It also follows that when $n \equiv 2\; (\hbox{mod }4)$ and $T$ has no space-like eigenvalue $1$ or $-1$, then any element $S$ which conjugates $T$ to $T^{-1}$ have determinant necessarily $-1$, and  hence $T$ can not be real in this case. 

This completes the proof of \thmref{realson1}. 

\subsubsection{Proof of \corref{1}}
\begin{proof}
Suppose $T$ is an isometry of $\H^{n+1}$. Suppose $T$ is real. It is enough to construct an involution $g$ in $SO_o(n,1)$ such that $gTg^{-1}=T^{-1}$. The construction of $g$  follows from the above theorem, and the decomposition of $\V$ described in the above proof.  
\end{proof}

\section{Classification of isometries}\label{class}
\subsection{The rotation angles}\label{ra}

\begin{definition}
 Let $S$ be an element in $SO(n)$. If the complex conjugate eigenvalues of $S$ are given by $\{e^{i\theta_j}, e^{-i\theta_j}\}$, $0 < \theta_j \leq \pi$, $j=1,...,k$, then $\{ \theta_1,...,\theta_k \}$ are called the \emph{rotation angles} of $S$. 

If an element $S$ in $SO(n)$ has $k$ rotation angles, then it is called a $k$-rotation.  
\end{definition}

\begin{remark}\label{krd}
If $S$ is a $k$-rotation, then $\E^n$ has a decomposition $D: \ \E^n=\oplus_{i=1}^k \V_i \oplus \Lambda^{n-2k}$, where $2k \leq n$, $S|_{\Lambda^{n-2k}}$ is the identity, each $\V_j$ has dimension $2$, and $S|_{\V_j}$ is a non-identity element of $O(2)$ with eigenvalues $\{e^{i \theta_j}, e^{-i \theta_j}\}$, $0<\theta_j\leq \pi$. The class $[D]$ in $\d(2; \ k)$ is called the \emph{decomposition} of $S$. 
\end{remark}

\begin{remark}
Classically the rotation angles are defined to be an element in $[-\pi, \pi]$. This works at dimension two. However, in higher dimensions, it is not possible to give a consistant definition of rotation angles in $[-\pi, \pi]$, since the choice of rotation angle at a two dimensional subspace in the above decomposition $D$ depends on a choice of orientation at this subspace. For example, 
take the standard block diagonal matrix $A$ in $SO(4)$ with two rotation angles
$\theta_1$ and $\theta_2$. Conjugating $A$ by the diagonal matrix ${diag}(1,-1,1,-1)$
gives the matrix $A^{-1}$ with rotation angles $-\theta_1$ and $-\theta_2$.  
This example shows that this notion of rotation angles depend on the choice of an orientation. 

To avoid this issue, to each rotation angle we assign a value in the interval $(0, \pi]$. This value depends on the complex conjugate eigenvalues of $T|_{\V_i}$ in the above decomposition. Since the eigenvalues are conjugacy invariants, hence the rotation angles are also conjugacy invariants in $O(n)$. 
\end{remark}

\begin{definition}
(i) Let $r>0$, and $T$ be a $k$-rotation of $\E^n$. The map $rT$ is called a \emph{$k$-rotatory stretch} of $\E^n$ with stretch factor $r$. 

(ii) Let $T$ be an orientation-preserving isometry of the Euclidean space $\E^n$ such that there is a decomposition into $T$-invariant subspaces $\Pi: \E^n=\oplus_{j=1}^k \V_j \oplus \Lambda^{n-2k}$, where for each $j=1,2,...,k$, $T|_{\V_j}$ is a non-identity element in $O(2)$ with eigenvalues $\{ e^{i \theta_j}, e^{-i \theta_j} \}$, and $T|_{\Lambda^{n-2k}}$ is a translation $x \mapsto x+a$. Such an element $T$ is called a \emph{$k$-rotatory translation} of $\E^n$. With respect to an orthonormal  coordinate system, 
the $k$-rotatory translations are of the form $Ax +b$, where $b$ is in $\E^n$ and $A$ is a $k$-rotation with the property that it has at least one eigenvalue $1$. 
\end{definition}

\begin{definition}
Let $T$ be an orientation-preserving conformal diffeomorphism of the sphere $\s^n$. Then $T$ is called a $k$-rotation of $\s^n$ if it has at least two fixed-points on $\s^n$, and it acts as a $k$-rotation on the complement of any of the fixed points. 
\end{definition}

\subsection{The classification of isometries of the hyperbolic space} 
In this section all isometries of $\H^{n+1}$ are assumed to be orientation-preserving. 

We consider the ball model of $\H^{n+1}$. Let $T$ be an elliptic isometry of $\H^{n+1}$, and $x$ be its fixed point in $\H^{n+1}$. Conjugating $T$ we can assume the fixed point to be $0$. Then $T$ is an element in $SO(n+1)$, and we can read the rotation angles. If $T$ has $k$ rotation angles, it is a $k$-rotatory elliptic. 

Suppose $T$ has more than one fixed point on $\H^{n+1}$. Then $T$ pointwise fixes a geodesic, and in particular $T$ fixes at least two points on the conformal boundary $\s^n$. When $k \leq [\frac{n}{2}]$, a $k$-rotatory elliptic has at least two fixed points on the hyperbolic space.  Thus, for $k \leq [\frac{n}{2}]$,  a $k$-rotatory elliptic restricts to a $k$-rotation of $\s^n$. Also a $k$-rotation of $\s^n$ extends uniquely to a $k$-rotatory elliptic by the Poincar\'e extension, cf. Ratcliffe \cite{rat}. 
In the upper-half space model of the hyperbolic space we identify $\s^n$ with the extended Euclidean space $\hat \E^n=\E^n \cup \{\infty\}$. In this case a $k$-rotation of $\s^n$ is conjugate to a $k$-rotation of $\E^n$. Hence, for $k \leq [\frac{n}{2}]$, a $k$-rotatory elliptic is conjugate to a $k$-rotation of $\E^n$. 

\medskip 
\emph{For the parabolic and hyperbolic cases we use the upper-half space model of $\H^{n+1}$. In these cases as above the conformal boundary $\s^n$ is identified with the extended Euclidean space. We shall follow this convention through out the paper. }

\medskip Let $T$ be a parabolic isometry. By conjugation assume that its unique fixed point on $\s^n$ is $\infty$. Then restriction of $T$ to $\E^n$ is a $k$-rotatory translation of the form $A_Tx+b$. In this case the associated orthogonal transformation $A_T$ must have an eigenvalue $1$. If $A_T$ has $k$ rotation angles, then $T$ is called a $k$-rotatory parabolic. 

If $T$ is hyperbolic, by conjugation, let its fixed points be $0$ and $\infty$. Then the restriction of $T$ to $\E^n$ is a $k$-rotatory stretch $rA_T$, $r \neq 1$. The rotation angles of $T$ are the rotation angles of $A_T$, and $T$ is called a $k$-rotatory hyperbolic. 

Gongopadhyay-Kulkarni \cite{kg} made  the above classification using the hyperboloid model of the hyperbolic space. In the hyperboloid model the isometry group is a linear algebraic group. The refined classification was obtained using the Jordan decomposition of an isometry. The notion of an \emph{angle} of an isometry in the hyperboloid model was earlier defined by Greenberg \cite{greenberg}. According to Greenberg an angle of an isometry was an  element in $[-\pi,0) \cup (0, \pi]$. In order to make them an invariant of the conjugacy class, we restrict them in the interval $(0, \pi]$. 

 It follows from the correspondence between the ball model and the hyperboloid model of the hyperbolic space that the number of rotation angles in the ball model is the same as the number of rotation angles in the hyperboloid model. Thus the refined classification of the isometries in the ball model is equivalent to the classification of Gongopadhyay-Kulkarni \cite{kg}.

\section{Topology of the conjugacy classes of regular $k$-rotations: Proof of \thmref{tcco}}\label{cco}
Let $\o_{k;\Theta} (n)$ denote the set of all $k$-rotations of $\E^n$ with rotation angles $\Theta=\{\theta_1, ..., \theta_k\}$, $0 < \theta_i \leq \pi$. Then $\o_{k;\Theta}$ is a conjugacy class in $O(n)$. 

Let $B(\theta)=\begin{pmatrix} \cos \theta & - \sin \theta
\\ \sin \theta & \cos \theta \end{pmatrix}$, and let  $\bar B(\theta)$ denote either of $B(\theta)$ or $B(-\theta)$. 

\subsection{The conjugacy classes of regular $k$-rotations of $\E^{2k}$: Proof of \thmref{tcco}.(i)} \ \ 

Let $T$ be a regular $k$-rotation of $\E^{2k}$. Then there is the eigenspace decomposition $D_T: \ \E^{2k}=\oplus_{i=1}^k {\V_i}$, where for each $i=1,...,k$, $\V_i$ is a two-dimensional  $T$-invariant subspace, and  \hbox{$T|_{\V_i}=\bar B(\theta_i)$}.  Thus with respect to $D_T$,  $T$ is of the form
$$\bar B(\theta_1,...,\theta_k)=\begin{pmatrix}\bar B(\theta_1) &  &  &  & \\ & \bar B(\theta_2) &  &  &  &  \\ & & \ddots & & \\ &  & & \bar B(\theta_k)  \end{pmatrix}.$$
Let $[D_T]$ denote the class of $D_T$ in $\d(2; \ k)$. Define the map $\alpha: \o_{k;\Theta}(2k) \to  \d(2; \ k)$ by \hbox{$\alpha( T)=[D_T]$}.
Consider an element  $[D]$ in $\d(2; \ k)$. Let $D: \E^{2k}=\oplus_{i=1}^k \W_i$ be a representative of this class.  Corresponding to the decomposition $[D]$, there are only  finitely many $k$-rotations in $\o_{k;\Theta}(2k)$. With respect to $D$ these $k$-rotations are of the form $\bar B(\Phi)$, where $\Phi$ is a permutation of 
$(\theta_1 , ..., \theta_k)$. For each $i = 1, 2, ..., k$, $\bar B(\theta_i )$ has two choices if $\theta_i \neq \pi$. If $\theta_i=\pi$ for some $i$, then $\bar B(\theta_i)$  has the unique choice $diag(-1, -1)$. Thus for each  permutation $\Phi$ there are
$c$ choices of k-rotations in $\o_{k,\Theta}(2k)$, where
$$c=\left \{ \begin{array}{ll} 2^{k} \ \hbox{ if  there is no rotation angle $\pi$, }\\
              2^{k-1} \ \hbox{ otherwise.}
             \end{array}\right.$$
There are $k!$ choices for $\Phi$. Hence the cardinality of $\alpha^{-1} ([D])$ is $d_0(k)=c k!$. 
For each $[D]$ in $\d(2; \  k)$, we identify $\alpha^{-1} ([D])$ with a finite set $F (\Theta)$ of cardinality $d_0(k)$. This gives us the following fibration with fiber $F (\Theta)$. 
$$\begin{CD}
F(\Theta) \\
@VVV \\
\o_{k;\Theta}(2k)\\
{\alpha}@VVV\\
 \d(2; \ k)
\end{CD}$$
With the topology induced by this fibration,  $\o_{k;\Theta}(2k)$ is a $d_0(k)$-sheeted covering space of $\d(2; \ k)$. 

\begin{remark}\label{2}
 For a non-regular $k$-rotations $\tau$, the decomposition of $\E^{2k}$ into the eigenspaces of $\tau$ is no more unique. For example, suppose $k=2$, and $\chi_{\tau}(x)=(x^2-2 \cos \theta \ x +1)^2$. Choose an eigenspace decomposition of $\tau$: $\V_1 \oplus \V_2$, and let $\{v_1, w_1, v_2, w_2\}$ be the corresponding orthonormal eigenbasis. Note that,  $\E^4=span(v_1, w_2) \oplus span(v_2, w_1)$ is also an eigenspace decomposition to $\tau$.  Hence it is not possible to assign to $\tau$ a unique decomposition as above. Thus the above construction for regular $k$-rotations does not hold  for non-regular $k$-rotations. Consequently, there is no canonical fibration as above for the conjugacy classes of non-regular $k$-rotations. 
\end{remark}

\subsection{The conjugacy classes of regular $k$-rotations of $\E^n$: Proof of \thmref{tcco}.(ii)}  \ \ 

Let $T$ be a regular $k$-rotation of $\E^n$. Then as in \remref{krd}, $T$ pointwise fixes an $(n-2k)$-dimensional subspace $\Lambda_T$ of $\E^n$. On the compliment $\Lambda^c_T$ of this subspace $T$ is a $k$-rotation. This defines a map
$\mu: \o_{k;\Theta}(n) \to \g_{n-2k}(n)$ given by $\mu(T)=\Lambda_T$. Clearly $\mu^{-1}(\Lambda_T)$ consists of all isometries which are $k$-rotations on $\Lambda^c_T \approx \E^{2k}$ with rotation angles $\Theta$. For each $\Lambda$ in $\g_{n-2k}(n)$ we identify $\mu^{-1}(\Lambda)$ with $\o_{k,\Theta}(2k)$. This induces a topology on $\o_{k;\Theta}(n)$ such that $\mu$ is a fibration with fiber $\o_{k,\Theta}(2k)$. 

This establishes \thmref{tcco}.

\section{Topology of the Conjugacy classes in $M_o(n)$: Proof of \thmref{tcc}}\label{cc}
When $n \equiv 2, \ 3 \ ( \hbox{mod }4)$, then it follows that two orientation-preserving isometries of $\H^{n+1}$ are conjugate in $M_o(n)$ if and only if they have the same classification based on their fixed points, have the same set of rotation angles, and their stretch factors are either the same or a unit.  When $n \equiv 1, \ 3 \ (\hbox{mod }4)$, then by \thmref{realson1} we see that it is true in many cases, except a few exceptional cases where an isometry $T$ is not conjugate to $T^{-1}$ in $M_o(n)$. However, it follows from Gongopadhyay-Kulkarni \cite[Theorem-1.2]{kg} that $T$ and $T^{-1}$ are conjugate in $M(n)$ for all values of $n$. Hence we extend the group $M_o(n)$ to the larger group $M(n)$ in these exceptional cases, and consider the conjugacy class of $T$ in $M(n)$.  With this convention the conjugacy classes of the (non-identity) orientation-preserving isometries of $\H^{n+1}$ are given by: 
$$\e_{k; \Theta}(n+1)=\{f \in M_o(n) \ | \  f \hbox{ is elliptic and has rotation angles } \Theta=\{\theta_1,...,\theta_k\} \ \},$$

 $\h_{k; r, \Theta}(n+1)=\{ f \in M_o(n) \ | \ f \hbox{ is hyperbolic with stretch factor $r$ or $r^{-1}$, and has rotation angles}$

 \hspace{2.5in} $\Theta=\{\theta_1,...,\theta_k\} \ \}, \ r \neq 1$
$$\p_{k;  \Theta}(n+1)=\{f \in M_o(n) \ | \ f \hbox{ is parabolic and has rotation angles } \Theta=\{\theta_1,...,\theta_k\} \ \},$$
where for each $i=1,2,...,k$, $0 < \theta_i \leq \pi$. 

In the following we shall implicitly use \cite[Theorem-1.1]{kg} while making assertions about the dynamics of the isometries. 

\subsection{Proof of \thmref{tcc}} Let $T$ be a regular isometry of $\H^{n+1}$ such that it has no rotation angle $\pi$.  

\subsubsection*{The space $\e_{k;\Theta}(n+1)$} Let $T$ be an element in  $\e_{k;  \Theta}(n+1)$, $k \leq [\frac{n}{2}]$. Then $T$ acts as a $k$-rotation on $\s^n$ and it has at least two fixed points on $\s^n$. In fact, $T$ pointwise fixes an $(n-2k)$-dimensional sphere on $\s^n$. Let $\s^{n-2k}_T \subset \s^n$ be the fixed-point sphere of $T$. Define the map $\varepsilon: \e_{k;  \Theta}(n+1) \to \mathcal S_{n-2k}(n): T \mapsto \s^{n-2k}_T$. Since the group $M_o(n)$ acts transitively on $\mathcal S_{n-2k}(n)$, by conjugation we identify an element of $\mathcal S_{n-2k}(n)$  with $\hat \E^{n-2k}$. With this identification, we have $\varepsilon^{-1}(\s^{n-2k}_T) \approx \o_{k;  \Theta}(2k) \approx \o_{k}(2k)$. This induces a topology on $\e_{k;  \Theta}(n+1)$ such that $\varepsilon$ is a fibration with fiber $\o_{k}(2k)$. 

\medskip Suppose $k=\frac{n+1}{2}$. This case occurs only when $n$ is odd.  We assert that every $\frac{n+1}{2}$-rotatory elliptic isometry of $\H^n$ has a unique fixed point on $\H^{n+1}$. To see this we use the ball model of the hyperbolic space.  If possible suppose an elliptic isometry $T$ has at least two fixed points $x$ and $y$ on the hyperbolic space. Since between two points there is a unique geodesic, $T$ must fixes the end points of the geodesic joining $x$ and $y$. Consequently $T$ fixes the geodesic pointwise, cf. Chen-Greenberg \cite[Lemma 3.3.2]{cg}. Thus the associated orthogonal transformation $A_T$ must have an eigenvalue $1$, and hence the number of rotation-angles is at most $\frac{n}{2}$. This is a contradiction. 

For such an isometry $T$, let $x_T$ denote its unique fixed point on $\H^n$. 
Let \hbox{$\e_{\frac{n+1}{2}; \Theta, x}(n+1)$} be the set of $\frac{n+1}{2}$-rotatory elliptics with rotation angles $\Theta$ and fixed-point $x$. By conjugation we can assume $x$ to be $0$ and identify $\e_{\frac{n+1}{2}; \Theta, x}(n+1)$ with $\e_{\frac{n+1}{2}; \Theta, 0}(n+1)$. Now the set $\e_{\frac{n+1}{2}; \Theta, 0}(n+1)$ is precisely the space of all $\frac{n+1}{2}$-rotations of $\E^{n+1}$. Hence we identify $\e_{\frac{n+1}{2}; \Theta, x}(n+1)$ with $\o_{\frac{n+1}{2};  \Theta}(n+1) \approx \o_{\frac{n+1}{2}}(n+1)$.

 Now define the map \hbox{$\psi: \e_{\frac{n+1}{2}; \Theta}(n+1) \to \H^{n+1}$} by $\psi(T)=x_T$. Clearly for $x$ in $\H^{n+1}$, 
$$\psi^{-1}(x)= \e_{\frac{n+1}{2}; \Theta, x}(n+1) \approx \o_{\frac{n+1}{2}}(n+1).$$ 
Hence with the induced topology, $\psi$ is a fibration with fiber $\o_{\frac{n+1}{2}}(n+1)$ and base $\H^{n+1}$. 

\subsubsection*{The space $\h_{k; r, \Theta}(n+1)$} Let $T$ be in $\h_{k;  r,  \Theta}(n+1)$.  Let $x_T$, $y_T$ be its fixed points on $\s^n$. Define the map $\varrho: \h_{k; r, \Theta}(n+1) \to \B_n: T \mapsto [x_T, y_T]$. Let $\h_{k; r,  x, y, \Theta}(n+1)$ denote the set of $k$-rotatory hyperbolic isometries with fixed points $x, \ y$, stretch factor $r$ or $r^{-1}$, and rotation angles $\Theta$.  
Since the group $M_o(n)$ acts doubly transitively on the conformal boundary, by conjugation we identify  
$\h_{k;  r,  x, y, \Theta}(n+1)$ and $\h_{k;  r,  0, \infty, \Theta}(n+1)$. An element in $\h_{k; r,  0, \infty, \Theta}(n+1)$ acts as a $k$-rotatory stretch of $\E^n$ with stretch factor $r$ or $r^{-1}$, and such a $k$-rotatory stretch with rotation angles $\Theta$ extends uniquely to an element in  $\h_{k;  r, 0, \infty, \Theta}(n+1)$.  Let $\r_{k; r, \Theta}(n)$ denote the set of $k$-rotatory stretches of $\E^n$ with stretch factor $r$ or $r^{-1}$, and  rotation angles $\Theta$. Then $\r_{k; r, \Theta}(n) \approx \o_{k; \Theta}(n) \times \{r, r^{-1}\} \approx \o_{k}(n)\times \{r , r^{-1} \}$ . Hence we identify $\h_{k;  r, 0, \infty, \Theta}(n+1)$ with $\o_{k}(n) \sqcup \o_k(n)$, where $\sqcup$ denote the disjoint union.  

So for $[x,y]$ in $\B_n$ we have $\varrho^{-1}([x,y])=\o_{k}(n)\sqcup \o_k(n)$. This induces a topology on $\h_{k;   \Theta}(n+1)$ such that  $\varrho$ is a fibration with fiber $\o_{k}(n) \sqcup \o_k(n)$. 

\subsubsection*{The space $\p_{k; \Theta}(n+1)$} Let $T$ be an element in $\p_{k;  \Theta}(n+1)$. Let $x_T$ be its unique fixed point on the conformal boundary $\s^n$. 
Let $\p_{k;  x, \Theta}$ denote the set of $k$-rotatory parabolics with fixed point $x$ and rotation angles $\Theta$. Since the group $M_o(n)$ acts transitively on the boundary, up to conjugacy we can assume the fixed point to be $\infty$, and for all $x$ on $\s^n$,  this identifies $\p_{k;  x, \Theta}(n+1)$ with $\p_{k;  \infty, \Theta}(n+1)$. 

Let $\t_{k; \Theta}(n)$ denote the set of $k$-rotatory translations of $\E^n$ with rotation angles $\Theta$. Then $\t_{k; \Theta}(n) \approx \o_{k;  \Theta}(n) \times {\R^n}^{\ast} \approx \o_{k}(n) \times {\R^n}^{\ast}$, where ${\R^n}^{\ast}=\R^n-\{{\bf 0}\} $.   Every element in $\p_{k; \infty,\Theta}(n+1)$ restricts to $\E^n$ as a $k$-rotatory translation and conversely by Poincar\'e extension every element of $\p_{k;  \infty,\Theta}(n+1)$ extends uniquely to an element in $\p_{k; \  \infty,\Theta}(n+1)$. This  identifies $\p_{k;  \infty,\Theta}(n+1)$ with $\t_{k; \Theta}(n)$. 

Define the map $\zeta: \p_{k;  \Theta}(n+1) \to \s^n$  by $\zeta(T)=x_T$. For a point $x$ in $\s^n$ the fiber is given by $\zeta^{-1} (x) \approx \o_{k}(n) \times {\R^n}^{\ast}$. This induces a topology on $\p_{k; \Theta}(n)$ such that $\zeta$ is a fibration with fiber $\o_{k}(n) \times {\R^n}^{\ast}$. 

\medskip It follows from the description of the conjugacy classes that they do not depend on the numerical values of the rotation angles, rather they only depend on the the number of rotation angles. Hence we identify
the conjugacy classes of regular k-rotatory elliptics, resp. parabolics, resp. hyperbolics with no rotation angle  $\pi$. 

This establishes \thmref{tcc}.

The case when one of the rotation angles is $\pi$ can be established similarly.

\subsubsection{Spaces of Hyperbolic isometries} 
Following the proof above, we have the description of the spaces of regular hyperbolic isometries with a given set of rotation angles. 

 Let $\h_{k;\Theta}(n+1)$ denote the set of regular $k$-rotatory hyperbolic isometries of $\H^{n+1}$ with rotation angles $\Theta$. Assume that $\pi$ is not a  rotation angle.  Let $\h_{k;  x, y, \Theta}(n+1)$ denote the set of $k$-rotatory hyperbolic isometries with fixed points $x, \ y$ and rotation angles $\Theta$.  
    Let $\r_{k; \Theta}(n)$ denote the set of $k$-rotatory stretches of $\E^n$ with rotation angles $\Theta$. Then $\h_{k; x,y,\Theta}(n+1) \approx \r_{k; \Theta}(n) \approx \o_{k; \Theta}(n) \times \R_+^{\ast} \approx \o_{k}(n)\times \R_+^{\ast}$ , where $\R^{\ast}_+$ denote the set of positive real numbers different from $1$. 

 Define the map $\varrho: \h_{k; \Theta}(n+1) \to \B_n: T \mapsto [x_T, y_T]$.  For $[x,y]$ in $\B_n$ we have $\varrho^{-1}([x,y])=\o_{k}(n)\times \R_+^{\ast}$. This induces a topology on $\h_{k;   \Theta}(n+1)$ such that  $\varrho$ is a fibration with fiber $\o_{k}(n) \times \R_+^{\ast}$.

\begin{acknowledgement}
 Wensheng Cao informed me about the work of Short et. al. \cite{sh1, sh2}. Ian Short told me about  his motivations \cite{bir, d} to call the real elements `reversible'.  I thank Cao and Short for their communications. Thanks are also due to Satya Deo for discussing the connectivity of $\tilde \Delta_n$.  
\end{acknowledgement}

\end{document}